\begin{document}

\newtheorem{theorem}{Theorem}[section]
\newtheorem{lemma}[theorem]{Lemma}
\newtheorem{corollary}[theorem]{Corollary}
\newtheorem{proposition}[theorem]{Proposition}

\newtheorem{maintheorem}{Main Theorem}
\def\themaintheorem{\unskip}

\theoremstyle{definition}
\newtheorem{remark}{Remark}
\def\theremark{\unskip}
\newtheorem{scholium}{Scholium}
\newtheorem{claim}[remark]{Claim}
\newtheorem{definition}{Definition}
\newtheorem{problem}{Problem}[section]

\numberwithin{equation}{section}

\def\Re{\operatorname{Re} }
\def\Im{\operatorname{Im} }
\def\distance{\operatorname{distance\,} }
\def\domain{\operatorname{Domain\,} }
\def\e{\varepsilon}
\def\eps{\varepsilon}
\def\p{\partial}

\def\reals{ {{\mathbb R}} }
\def\complex{ {{\mathbb C}} }
\def\torus{ {{\mathbb T}} }
\def\naturals{ {{\mathbb N}} }
\def\integers{ {{\mathbb Z}} }
\def\complex{{\mathbb C}}

\def\scriptd{ {\mathcal D} }
\def\scripth{ {\mathcal H} }
\def\scriptr{ {\mathcal R} }
\def\scriptc{ {\mathcal C} }
\def\scripte{ {\mathcal E} }

\def\scriptb{ {\mathcal B} }
\def\scriptt{{\mathcal T}}
\def\scriptf{{\mathcal F}}
\def\scriptg{{\mathcal G}}
\def\scriptv{{\mathcal V}}
\def\scriptl{{\mathcal L}}
\def\scriptn{{\mathcal N}}
\def\scriptm{{\mathcal M}}
\def\frakF{{\mathfrak F}}

\newcommand\dbarl{\overline{D}_\lambda}
\newcommand\dbarlnu{\overline{D}_{\lambda_\nu}}
\newcommand{\Dbarl}[1]{\overline{D}_{\lambda,#1}}
\newcommand\dbarlstar{ {\overline{D}^*_{\lambda}} }
\newcommand\dbarlstarnu{{{\overline{D}}_{\lambda_\nu}^*}}
\newcommand{\Dbarlstar}[1]{\overline{D}_{\lambda,#1}^*}
\newcommand{\xdbarl}[1]{\overline{{\mathcal D}}_{\lambda,#1}}
\newcommand{\xDbarl}[2]{\overline{{\mathcal D}}_{\lambda,#1,#2}}
\newcommand{\xdbarlstar}[1]{\overline{{\mathcal D}}_{\lambda,#1}^*}
\newcommand{\xDbarlstar}[2]{\overline{\scriptd}_{\lambda,#1,#2}^*}
\def\diver{\operatorname{div}}

\def\dbarb{\bar\partial_b}
\def\dbar{\bar\partial}
\def\op{\operatorname{op}}
\def\lt{L^2}
\newcommand{\norm}[1]{ \|  #1 \| }
\newcommand{\Norm}[1]{ \Big\|  #1 \Big\| }
\newcommand{\set}[1]{ \left\{ #1 \right\} }
\def\one{{\mathbf 1}}

\author{Michael Christ}
\address{
        Michael Christ\\
        Department of Mathematics\\
        University of California \\
        Berkeley, CA 94720-3840, USA}
\email{mchrist@math.berkeley.edu}
\thanks{The author was supported in part by NSF grant DMS-0901569.}

\date{August 14, 2013.}

\title[Off-diagonal Decay of Bergman kernels]
{Off-diagonal Decay of Bergman kernels: \\ On a Conjecture of Zelditch}


\maketitle

\section{Introduction}

This paper is a preliminary study of an inverse problem concerning asymptotic behavior of Bergman kernels. 
Let $X$ be a compact complex manifold, without boundary. Let $X$ be equipped with a $C^\infty$ Hermitian metric $g$,
along with the metrics on the bundles $B^{(p,q)}(X)$ of forms of bidegree $(p,q)$ induced by $g$,
and the volume form on $X$ associated to the induced Riemannian metric.
Denote by $\rho(z,z')$ the Riemannian distance from $z\in X$ to $z'\in X$.

Let $L$ be a positive holomorphic line bundle over $X$.  Let $L$ be equipped with a $C^\infty$ Hermitian metric $\phi$ 
whose curvature form is positive at every point. 

For each positive integer $\lambda$, let the line bundle $L^\lambda$ be the tensor product of $\lambda$ copies of $L$.
$L^\lambda$ inherits from $\phi$ a Hermitian metric so that for any $v\in L_z$, 
its $\lambda$--fold tensor product satisfies $|v\otimes v\otimes \cdots\otimes v| = |v|^\lambda$.

Let $L^2(X,L^\lambda)$ be the Hilbert space of equivalence classes of all square integrable Lebesgue measurable
sections of $L^\lambda$. 
Let $H^2(X,L^\lambda)$ be the closed subspace of $L^2(X,L^\lambda)$ consisting of all holomorphic sections.  
The Bergman projection operator $B_\lambda$ is by definition the orthogonal projection from $L^2(X,L^\lambda)$ 
onto $H^2(X,L^\lambda)$.
The Bergman kernel $B_\lambda(z,z')$ is the associated distribution-kernel; 
$B_\lambda(z,z')$ is a complex linear endomorphism from the fiber $L^\lambda_{z'}$ to the fiber $L^\lambda_z$.

The asymptotic behavior of the Bergman kernels as $\lambda\to\infty$, 
on and within distance $O(\lambda^{-1/2})$ of the diagonal, has been intensively studied.
This paper is concerned instead with the large $\lambda$ behavior at a positive distance from the diagonal.
It is well known that $B_\lambda$ tends rapidly to zero as $\lambda\to\infty$, away from the diagonal.
For real analytic $g,\phi$ there is decay at an exponential rate: provided that $\rho(z,z')\ge\delta>0$,
\begin{equation} \label{eq:expdecay} B_\lambda(z,z')  = O(e^{-c\lambda})\end{equation} for some $c=c(\delta)>0$.
For $C^\infty$ metrics $g,\phi$, 
\begin{equation}  B_\lambda(z,z')  = O(e^{-A\sqrt{\lambda\log\lambda}})\end{equation} 
for all $A<\infty$ \cite{christberg}.
This rate of decay is optimal \cite{christcounter}; if $h(\lambda)\to\infty$ as $\lambda\to\infty$
then there exist $X,L,\phi,g$ with $\phi,g\in C^\infty$ and points  $z\ne z'$ such that
\begin{equation} \label{subexpdecay} \limsup_{\lambda\to\infty} \sup_{\rho(z,z')
\ge\delta} e^{h(\lambda)\sqrt{\lambda\log\lambda}}\,{|B_\lambda(z,z')|}=\infty.  \end{equation}

Zelditch \cite{zelditchconjecture} has advanced a bold conjecture in the inverse direction,
proposing that exponential decay \eqref{eq:expdecay} holds only when $\phi$
is real analytic, and moreover that exponential decay for even an arbitrarily 
sparse sequence of values of $\lambda$ tending to infinity implies analyticity.
This note provides evidence in support of the conjecture,
by establishing it within the same framework in which the examples of subexponential decay 
\eqref{subexpdecay} were constructed \cite{christcounter}, generalized to arbitrary dimensions.
This framework, in which partial symmetry makes possible a separation of variables which simplifies
analytic issues, has been a very fruitful source of counterexamples and illustrative examples in the theory of
the Bergman projections and the Cauchy-Riemann equations, and more generally in the theory of partial differential
operators, including those with multiple characteristics.
Therefore we consider the validity of the conjecture within this limited
framework to be compelling evidence in its favor.
Moreover, although some elements of the analysis given here do not extend in a straightforward way to the general case,
it is hoped that its overall structure and some of its elements will be useful guides.  

\section{The framework}
Consider $\complex^d$, with coordinates $z=(z_1,\cdots,z_d)$.
Write $z_j=x_j+iy_j$ and $x=(x_1,\cdots,x_d)$, $y=(y_1,\cdots,y_d)\in\reals^d$.
Write $z=x+iy\in \reals^d+i\reals^d$.

Let $X$ be the noncompact complex manifold $X=\complex^d$, and let $L$ be the trivial
line bundle $L=\complex^d\times\complex^1$. $B^{(0,1)}$ denotes the bundle of forms of
bidegree $(0,1)$ over $\complex^d$.
$X$ is equipped with its usual flat metric as a complex Euclidean space. Thus integration
will be performed with respect to Lebesgue measure, and $B^{(0,1)}$
is equipped with the usual metric under which $|\bar\omega_J|=1$ where $\bar\omega_J
= \bar\omega_{j_1}\wedge\cdots\wedge\bar\omega_{j_q}$, whenever $j_1<j_2<\cdots<j_q$.

The metric $\phi$ on $L$ is represented by a $C^\infty$ real-valued function 
$\phi(z)$. The norm of an element $(z,t)\in \complex^d\times\complex=L$ is $e^{-\phi(z)}|t|$;
the norm of an element $(z,t)\in \complex^d\times\complex\leftrightarrow L^\lambda$ is $e^{-\lambda\phi(z)}|t|$;
$L^2(X,L^\lambda)$ is the Hilbert space of all Lebesgue measurable functions
$f:\complex^d\to\complex$ that satisfy \[\norm{f}_{L^2(X,L^\lambda)}^2
= \int_{\complex^d} |f(z)|^2 e^{-2\lambda\phi(z)}\,dm(z)<\infty.\]

The essential feature of the framework under discussion here is that
$\phi(x+iy)$ is a function of $x$ alone. We therefore write $\phi\equiv \phi(x)$.
We assume that the curvature form of $\phi$ is strictly positive, and uniformly bounded above and below.
Thus there exists $C\in(0,\infty)$ such that for all $z\in\complex^d$ and all $v\in\complex^d$,
\begin{equation} C^{-1}|v|^2
\le \sum_{j,k=1}^d \frac{\partial^2\phi}{\partial z_j\partial \bar z_k}(z)\,v_j\,\bar v_k \le C|v|^2
\end{equation} 
Because of our symmetry assumption, this simplifies to
\begin{equation} C^{-1}|v|^2
\le \sum_{j,k=1}^d \frac{\partial^2\phi}{\partial x_j\partial x_k}(x)\,v_j\, v_k
\le C|v|^2 \ \ \text{for all} \ \ x\in\reals^d\ \text{and}\ v\in\reals^d.  \end{equation} 

Under this positivity assumption, 
the space $H^2(X,L^\lambda)$ of all entire holomorphic functions satisfying
$\iint_{\complex^d} |f(x+iy)|^2 e^{-2\lambda\phi(x)}\,dx\,dy<\infty$ is a closed subspace of the
space $L^2(X,L^\lambda)$ of all equivalence classes of Lebesgue measurable functions 
for which the same integral is finite. 
The Bergman kernel $B_\lambda$ represents the orthogonal projection of $L^2(X,L^\lambda)$ onto $H^2(X,L^\lambda)$.
$B_\lambda(z,z')$ is a $C^\infty$ function off of the diagonal for all $\lambda>0$. 
These objects are well-defined for all $\lambda\in(0,\infty)$; one need not restrict to integer values.

\begin{theorem}
Let $X,L,\phi$ be as described.
Let $U\subset\complex^d$ be an open set, and suppose that 
for each $\delta>0$ there exist a sequence $\lambda_\nu$ tending to $\infty$
and $c>0$ such that for all $(z,z')\in U\times U$
satisfying $|z-z'|\ge\delta$ and for all sufficiently large $\nu$, 
\begin{equation} |B_{\lambda_\nu}(z,z')|\le e^{-c\lambda_\nu}.\end{equation}
Then $\phi\in C^\omega(U)$.
\end{theorem}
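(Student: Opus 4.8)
The plan is to exploit the partial symmetry to reduce the problem to a one‑parameter family of weighted Bergman spaces on $\reals^d$, then translate off‑diagonal decay of $B_\lambda$ into a statement about the analytic continuation of an associated family of functions. Since $\phi(x+iy)=\phi(x)$ is independent of $y$, the Fourier transform in $y$ diagonalizes the problem: writing $f(x+iy)=\int_{\reals^d} \hat f(x,\xi) e^{iy\cdot\xi}\,d\xi$ and using holomorphy (the Cauchy–Riemann equations force $\hat f(x,\xi)=e^{-x\cdot\xi}g(\xi)$ for an appropriate $g$), one finds that $H^2(X,L^\lambda)$ decomposes as a direct integral over $\xi\in\reals^d$ of one‑dimensional spaces, and the Bergman kernel becomes
\begin{equation}
B_\lambda(z,z') = \int_{\reals^d} \frac{e^{i(y-y')\cdot\xi}\,e^{-(x+x')\cdot\xi}}{\int_{\reals^d} e^{-2x\cdot\xi}e^{-2\lambda\phi(x)}\,dx}\,d\xi.
\end{equation}
Thus $B_\lambda$ is an explicit oscillatory integral whose amplitude is governed by the normalizing integrals $I_\lambda(\xi)=\int e^{-2x\cdot\xi-2\lambda\phi(x)}\,dx$, which by the uniform strict convexity of $\phi$ are finite and well‑behaved; Laplace's method identifies $I_\lambda(\xi)$ in terms of the Legendre‑type transform of $\phi$ evaluated at $\xi/\lambda$.

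Next I would rescale. Setting $\eta=\xi/\lambda$ and performing steepest descent, the critical point of $x\mapsto x\cdot\eta+\phi(x)$ is the point $x(\eta)$ where $\nabla\phi(x(\eta))=-\eta$, which by strict convexity is a diffeomorphism $\eta\mapsto x(\eta)$ of $\reals^d$ onto itself. The upshot is that, after extracting the leading Gaussian factor, $B_\lambda(z,z')$ for fixed $z\ne z'$ is, up to polynomially bounded errors, $\lambda^{d}$ times $\int_{\reals^d} e^{\lambda\Psi(\eta;z,z')}a(\eta)\,d\eta$ where the phase $\Psi$ is built from $\phi$ and its Legendre transform and the real part of $\Psi$ at its critical point vanishes precisely on the diagonal $z=z'$ and is otherwise related to the squared Riemannian‑type distance. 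Exponential decay $|B_{\lambda_\nu}(z,z')|\le e^{-c\lambda_\nu}$ uniformly for $|z-z'|\ge\delta$, $(z,z')\in U\times U$, along a sequence $\lambda_\nu\to\infty$, then forces a lower bound on $-\Re\Psi$ at the critical point, i.e. a quantitative lower bound on this distance function. The key point is that such a uniform bound is incompatible with $\phi$ being merely $C^\infty$ at a point where it fails to be real analytic: if $\phi$ is not real analytic at some $x_0$, one can (as in \cite{christcounter}) find nearby complex points $z,z'$ at which the holomorphic phase $\Psi$, continued into the complex, has $\Re\Psi(\cdot;z,z')$ at its critical point much closer to $0$ than $c$, for infinitely many of the $\lambda_\nu$ — contradicting the hypothesis. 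Equivalently, one shows that the hypothesis forces the Legendre transform $\phi^*$ to extend holomorphically to a fixed complex neighborhood, and Legendre duality then returns analyticity of $\phi$ on $U$.

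The main obstacle I anticipate is making the contradiction step genuinely quantitative and local: turning "not real analytic at $x_0$" into the existence of complex points $z,z'$ near $x_0$ and infinitely many indices $\nu$ at which $\Re\Psi$ is too small. This requires (i) a careful analysis of the domain of holomorphy of the Legendre transform of a $C^\infty$ uniformly convex function, showing it is controlled precisely by the analyticity of $\phi$ — presumably via Bros–Iagolnitzer / FBI‑transform characterizations of analyticity, or via the counterexample construction of \cite{christcounter} run in reverse; and (ii) a uniform stationary‑phase estimate valid as both $\lambda\to\infty$ and $z,z'$ approach the real domain, with the error terms controlled independently of how close $\Re\Psi$ is to zero. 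I would structure the argument as: (1) separation of variables and the explicit integral formula; (2) steepest‑descent reduction to the phase $\Psi$ and its critical value; (3) the equivalence "uniform exponential off‑diagonal decay along $\lambda_\nu$" $\iff$ "uniform lower bound on $-\Re\Psi$" $\iff$ "$\phi^*$, hence $\phi$, extends holomorphically"; with step (3), specifically the passage from a sparse sequence of good $\lambda_\nu$ to a bound holding for the continuous phase $\Psi$, being the delicate part where sparseness of $\{\lambda_\nu\}$ must be overcome by analyticity (holomorphy in a parameter) rather than by interpolation in $\lambda$.
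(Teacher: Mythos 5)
Your proposal correctly identifies the framework: the separation of variables, the direct‑integral (Fourier‑in‑$y$) formula for the Bergman kernel, the central role of the Legendre transform, and the fact that sparseness of $\{\lambda_\nu\}$ must be overcome by holomorphy in a parameter rather than interpolation in $\lambda$. Moreover the denominator $I_\lambda(\xi) = \int e^{-2x\cdot\xi - 2\lambda\phi(x)}\,dx$ in your integral formula is, after the rescaling $\xi\mapsto -\lambda\xi$, exactly the function $\scriptf(\xi,\lambda)$ that the paper's argument is built around; so you are genuinely looking at the right object. The final step, that real analyticity of $\xi\mapsto\xi\cdot\tau(\xi)-\phi(\tau(\xi))$ with $\tau=(\nabla\phi)^{-1}$ yields real analyticity of $\phi$, you state correctly and it is indeed elementary.

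The genuine gap is in the implication ``uniform exponential off‑diagonal decay $\Rightarrow$ uniform lower bound on $-\Re\Psi$ at the critical point,'' which you correctly flag as the delicate step but for which you offer no mechanism. Your appeal to \cite{christcounter} is in the wrong direction: that reference \emph{constructs particular} non‑analytic $\phi$ with slow decay, it does not show that \emph{every} non‑analytic $\phi$ produces slow decay — the latter is precisely the theorem to be proved here, so citing it is circular. Nor can the steepest‑descent strategy be made to work by simply ``controlling the errors,'' because the integrand in your formula for $B_\lambda$ has $I_\lambda(\xi)$ in the denominator, and for complex $\xi$ near the real axis this denominator can vanish; deforming the contour then picks up residues, and those residues are exactly the source of slow off‑diagonal decay. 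Making your program rigorous therefore forces you to prove a quantitative lower bound on $|I_\lambda(\xi)|$ for nearly‑real $\xi$, which is what the paper's Proposition~\ref{prop:scriptfbounds} establishes — and it does so not by stationary phase but by a constructive argument that you do not anticipate. Specifically: assuming $|\scriptf(\xi_\nu,\lambda_\nu)|\le e^{-A_\nu\lambda_\nu}$, the paper solves the divergence equation $\diver u = f$ in weighted $L^2(\reals^d)$ with a \emph{concave} weight (the opposite of the plurisubharmonic regime in standard H\"ormander theory, which is why Lemma~\ref{lemma:weightedlower} requires a bespoke proof), produces from $u$ a form $v_\nu$ with $\dbar^*_{2\lambda\phi}v_\nu\approx\psi_\nu$, solves a further $\dbar$ equation with a modified weight $\tilde\phi$ to obtain $G_\nu$, and then derives a contradiction to exponential decay by observing that $\psi_\nu$ would have to be nearly orthogonal to itself on a ball $W$. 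Only after this non‑vanishing is secured does a normal‑families argument on the pluriharmonic functions $\lambda_\nu^{-1}\log|\scriptf(\cdot,\lambda_\nu)|$ — which are uniformly bounded \emph{because} of the non‑vanishing — produce a real‑analytic limit, which real stationary phase identifies with the Legendre expression. That chain of lemmas, especially the solvability of $\diver u=f$ with $L^2$ bounds against $e^{-4\Phi-\gamma}$ weights and the use of $\tilde\phi$, is absent from your outline and is the technical heart of the paper.
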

To put it another way, the function $x\mapsto\phi(x)$ is real analytic on the projection of $U$ onto $\reals^d$.

\section{Outline}

The proof is by contradiction.  Two constructions mediate between the Bergman projections and the metric $\phi$. 
Firstly, for each $\lambda$ we consider certain holomorphic sections of the associated line bundle 
which depend holomorphically on an external parameter $\xi\in\complex^d$. Secondly, for each $\lambda$
this family of sections gives rise in turn to a scalar-valued holomorphic function $\xi\mapsto\scriptf_\lambda(\xi)$.

Complex zeroes of $\scriptf_\lambda$ having suitably small imaginary parts were the key to the 
construction in \cite{christcounter} of metrics $\phi$ for which the Bergman kernels decay slowly 
as $\lambda\to\infty$.  Here we show that conversely, exponential decay not only precludes such zeros, 
but also precludes exponentially small values of $\scriptf_\lambda$. This lack of small values is equivalent to 
an upper bound on $\big|\lambda^{-1} \log|\scriptf_\lambda|\,\big|$ which is uniform in $\lambda$.

A normal families argument gives a sequence $\lambda_\nu\to\infty$ for which these functions
converge; the limiting function is real analytic in a complex disk. 
The restriction of this limiting function to the real domain is calculated directly
as the composition of the Legendre transform of $\phi$ with $(\nabla\phi)^{-1}$.  
The final step is to show that analyticity of this composed transform implies analyticity of $\phi$.

The construction
of \cite{christcounter} was executed only in the lowest-dimensional case $d=1$, but here we investigate
matters in arbitrary dimensions. Two new issues arise in higher dimensions. 
Firstly, while the formula defining $\scriptf_\lambda$ extends straightforwardly, 
its interpretation is not immediately clear. Secondly, in order to obtain auxiliary functions 
with suitable growth properties needed to conclude 
that $\scriptf_\lambda$ cannot take on any exponentially small values,
we are led to solve the divergence equation $\diver(u)=f$ in $\reals^d$, in Hilbert spaces with 
weighted $L^2$ norm bounds.  A seemingly essential issue is that the analysis requires 
bounds with respect to weights $e^{-\Phi}$ where $\Phi$ is subconvex, 
which is the real analogue of plurisuperharmonicity, 
rather than of the standard plurisubharmonicity of $\dbar$ theory. 
This is quite different from the usual situation; indeed the equation cannot be solved with
satisfactory bounds for arbitrary (closed) data. The necessary condition for solvability 
of $\diver(u)=f$ with the specific $f$ that arises in our analysis,
turns out to be the (exponentially near) vanishing of $\scriptf_\lambda$ --- so that 
the vanishing of $\scriptf_\lambda$ gains an interpretation as the key to resolving the second main obstacle.

That this necessary condition is sufficient for solvability with suitable bounds for subconvex weights, 
cannot be established by the 
standard integration-by-parts analysis. We expend some effort to establish solvability with the desired bounds. 
The solution is used to construct holomorphic sections which, if the Bergman kernels decay exponentially,
are very nearly in the range of the adjoint operator $\dbar^*$ and consequently are nearly orthogonal
to themselves.  This is the desired contradiction.

\section{Notations and framework}
Variables in $\complex^d$ will often be denoted by $z=x+iy$ where $x,y\in\reals^d$.
For $z,w\in\complex^d$ we will write 
\begin{equation} z\cdot w = \sum_{j=1}^d z_jw_j,\end{equation}
with no complex conjugation.

Lebesgue measure on either $\complex^d$ or $\reals^d$ will be denoted by $m$.
$L^2(\complex^d,w)$ is the Hilbert space of all equivalence classes of Lebesgue measurable
scalar-valued functions with norm squared $\int_{\complex^d} |f(z)|^2 w(z)\,dm(z)$.
The same notation $L^2(\complex^d,w)$ is also used to
denote the Hilbert space of all equivalence classes of Lebesgue measurable
$(0,1)$ forms with norm squared $\int_{\complex^d} |f(z)|^2 w(z)\,dm(z)$,
where $|\sum_{j=1}^d f_j(z)\bar z_j|^2 = \sum_{j=1}^d |f_j(z)|^2$.
The Bergman projections $B_\lambda$ associated to the weights $e^{-2\lambda\phi}$
are the orthogonal projections from $L^2(\complex^d,e^{-2\lambda\phi})$
to its closed subspace of entire holomorphic functions.

The following hypotheses concerning $\phi:\complex^d\to\reals$ will be in force throughout the paper.
\begin{gather}
\text{$\phi\in C^\infty$.} 
\label{phiH1}
\\
\text{$\phi(x+iy)$ depends on $x$ alone.} 
\label{phiH2}
\\
\text{$\phi$ is strictly convex.}
\label{phiH3}
\\
\left(\frac{\partial^2\phi}{\partial {x_i}\partial {x_j}}\right)_{i,j=1}^d \text{ is comparable to the identity matrix,}
\label{phiH4}
\end{gather}
uniformly as a function of $x\in\reals^d$. 
We will abuse notation by using the symbol $\phi$ to denote two functions,
one with domain $\complex^d$ and one with domain $\reals^d$, related by $\phi(x+iy)  = \phi(x)$. 
It will be clear from the context which of the two is intended.

The Cauchy-Riemann operator $\dbar$, mapping scalar-valued functions to $(0,1)$--forms, is defined by
\begin{equation} \dbar f = \sum_{k=1}^d \frac{\partial f}{\partial {\bar z_k}} \,d\bar z_k. \end{equation}
where
\begin{equation} \frac{\partial}{\partial\bar z_k} = \frac{\partial}{\partial x_k} +i \frac{\partial}{\partial y_k}.  \end{equation}
We also write
\begin{equation} \partial_{z_k}
= \frac{\partial}{\partial z_k} = \frac{\partial}{\partial x_k} -i \frac{\partial}{\partial y_k}.  \end{equation}
For $w=e^{-\lambda\phi}$, the formal adjoint $\dbar^*_{2\lambda\phi}$  of $\dbar$ is
\begin{equation}\label{dbarstarformula} 
\dbar^*_{2\lambda\phi}(\sum_j f_j \bar z_j) 
= -\sum_j \big(e^{2\lambda\phi}\partial_{z_j}e^{-2\lambda \phi} \big)f_j 
= -\sum_j \big(\partial_{z_j}-2\lambda\partial_{x_j}\phi\big)f_j \end{equation}
since $\phi$ depends only on $x$.

For $\xi\in\complex^d$ 
we will primarily with $\complex$--valued functions, and $(0,1)$ forms, 
of the special form $z=x+iy\mapsto e^{i\lambda\xi\cdot y}f(x)$. 
Such a scalar-valued function is holomorphic if and only if
\begin{equation}
0 = \dbar(e^{i\lambda\xi\cdot y}f(x)) = e^{i\lambda\xi\cdot y} \sum_{j=1}^d (\partial_{x_j}-\lambda\xi_j)f\,d\bar z_j.
\end{equation}
Forms and functions not of this special form will appear near the end of the analysis.
The operator $\dbar^*_{2\lambda\phi}$ can be applied to $e^{i\lambda\xi\cdot y}f(x)$
even though such a function rarely lies in $L^2(\complex^d,e^{-2\lambda\phi})$, by using the expression \eqref{dbarstarformula}.

The question around which our analysis revolves is
for which pairs $(\xi,\lambda)$ the function $e^{\lambda\xi\cdot z}$
is close to the range of $\dbar^*_{2\lambda\phi}$, in a suitable sense.
Formulation of this closeness must take into account the infinite $L^2(\complex^d,e^{-2\lambda\phi})$ norm
of the function $e^{\lambda z\cdot\xi}$.

Denote by $\diver$ the divergence operator, which maps $1$--forms with domain $\reals^d$ to scalar-valued functions
with the same domain:
\begin{equation} \diver(\sum_j u_j\,dx_j) = \sum_j \frac{\partial u_j}{\partial x_j} =
\sum_j (\partial_{x_j}u_j)\,dx_j.\end{equation}

Now
\begin{equation} \label{eq:adjointequation}
\dbar^*_{2\lambda\phi} \big(e^{i\lambda\xi\cdot y} u(x)\big)
= e^{i\lambda\xi\cdot y}f(x)\ \text{ if and only if } \ 
-\sum_j (\partial_{x_j}+\lambda\xi_j -2\lambda\partial_{x_j}\phi)u_j= f, \end{equation}
that is, if and only if
\begin{equation} \label{divergenceeqn} -\diver(e^{\lambda\xi\cdot x-2\lambda\phi(x)}u) 
= e^{\lambda\xi\cdot x-2\lambda\phi(x)} f = e^{2\lambda(\xi\cdot x-\phi(x))}.  \end{equation}
We are interested in the possible existence of pairs $(\xi,\lambda)$ for which
equation \eqref{eq:adjointequation} with right-hand side
$f(x) = e^{\lambda\xi\cdot x}$  admits an exact or near solution $u$ which
enjoys suitable upper bounds. 

The range of the divergence operator consists, formally, of all functions satisfying $\int_{\reals^d} g(x)\,dm(x)=0$.
Therefore the discussion turns on the approximate vanishing of
$\int_{\reals^d} e^{2\lambda(\xi\cdot x-\phi(x))}\,dm(x)$. 

This integral has an alternative interpretation as the analytic continuation to the complex domain, 
with respect to $\xi$, of the function
$\reals^d\owns\xi\mapsto \norm{e^{\lambda x\cdot\xi}}_{L^2(\reals^d,e^{-2\lambda\phi})}^2$.

\section{Preparations}

\subsection{Solvability of the divergence equation with $L^2$ bounds}

Define
\begin{equation} \Phi(x) =\Phi_{\xi,\lambda}(x)= \lambda(\Re\xi\cdot x-\phi(x)). \end{equation}
$\Phi$ depends only on the real part of $\xi$, and is real-valued.
The Hessian matrix of $\Phi$ is comparable to $-\lambda$ times the identity matrix, uniformly in $\lambda,\xi,x$.
Consequently there exist a unique point $x^\dagger\in\reals^d$, depending on $\xi,\lambda$, satisfying
\begin{equation} \Phi(x^\dagger)=\max_{x\in\reals^d} \Phi(x), \end{equation}
and constants $c_1,c_2,C\in\reals^+$ such that
\begin{equation} C^{-1} e^{-c_1\lambda|x-x^\dagger|^2}\le e^{2\Phi(x)}\le Ce^{-c_2\lambda|x-x^\dagger|^2} \end{equation}
uniformly for all $\xi,\lambda,x$.

Let $a>0$ be an exponent, depending only on the dimension $d$, which is to be chosen below. All that will matter
concerning $a$ is that it if $a$ is chosen to be sufficiently large then certain properties hold.

Define the auxiliary weight
\begin{equation} \label{eq:gammadefn} \gamma(x) = a\ln(1+|x-x^\dagger|^2). \end{equation}
In particular, we require that $a > d/2$, which ensures that 
\[\int_{\reals^d} e^{-\gamma(x)}\,dm = \int_{\reals^d} (1+|x-x^\dagger|^2)^{-a}\,dm(x)<\infty.\]
Therefore the function $e^{2\lambda(\xi\cdot x-\phi(x))}$ satisfies
\begin{equation}
\int_{\reals^d} |e^{2\lambda(\xi\cdot x-\phi(x))}|^2 e^{-4\Phi(x)-\gamma(x)}\,dm(x)
= \int_{\reals^d} e^{-\gamma(x)}\,dm(x)<\infty,
\end{equation}
and this quantity is independent of $\xi,\lambda$ even though $\gamma$ depends through $x^\dagger$ on the real part of $\xi$.

We will solve the equation \begin{equation}-\diver(u) = e^{2\lambda(\xi\cdot x-\phi(x))}\end{equation} with $u$
in the space of one-forms satisfying $\int_{\reals^d} |u(x)|^2 e^{-4\Phi(x)-2\gamma(x)}\,dm(x)<\infty$.
This is the reverse of the usual situation; the weight $\Phi$ is concave rather than convex,
so the standard weighted theory \cite{hormander}, adapted from the complex case to the real case, does not apply.

Let $\scripth_1$ be the Hilbert space of all equivalence classes of Lebesgue measurable complex--valued 
$(0,1)$ forms defined on $\reals^d$, with norm
\begin{equation} \norm{u}_{\scripth_1}^2 = \int_{\reals^d} |u(x)|^2 e^{-4\Phi(x)-2\gamma(x)}\,dm(x).  \end{equation}
Let $\scripth_2$ be the Hilbert space of all equivalence classes of Lebesgue measurable functions
$f:\reals^d\to\complex$, with norm
\begin{equation} \norm{f}_{\scripth_2}^2 = \int_{\reals^d} |f(x)|^2 e^{-4\Phi(x)-\gamma(x)}\,dm(x).  \end{equation}

If $a$ is sufficiently large then $f(x) = e^{2\lambda (x\cdot\xi-\phi(x))}$ satisfies
\begin{equation*}
\norm{e^{\lambda x\cdot\xi}}_{\scripth_2}^2 = 
\int_{\reals^d} e^{4\lambda(x\cdot\Re\xi-\phi(x))} e^{-4\lambda(x\cdot\Re\xi - \phi(x))}e^{-\gamma(x)}\,dm(x)
= \int_{\reals^d} e^{-\gamma(x)}\,dm(x)<\infty;
\end{equation*}
this norm is independent of $\xi,\lambda$. In particular, $e^{2\lambda(x\cdot\xi-\phi(x))}\in\scripth_2$.

Regard $\diver$ as an unbounded linear operator from $\scripth_1$ to $\scripth_2$,
whose domain is the closure of the space of continuously differentiable compactly supported one-forms with respect to the graph norm.
The formal adjoint $\diver^*$ of $\diver$ in this Hilbert space setting is
\begin{equation}\label{eq:diverstarformula}
\diver^*(f) = e^{\gamma(x)}\sum_{j=1}^d (-\partial_{x_j} + 4\partial_{x_j}\Phi+\partial_{x_j}\gamma)f\,dx_j.
\end{equation}

$\scripth_2\subset L^1(\reals^d)$ by virtue of the Cauchy-Schwarz inequality
and the rapid decay of $e^{4\Phi}$,  and thus $\int_{\reals^d} f\,dm$ is well-defined for all $f\in\scripth_2$.
Define $\frakF\subset\scripth_2$ to be the set of all $f\in\scripth_2$ that satisfy
\begin{equation} \int_{\reals^d} f(x)\,=0.  \end{equation}
$\frakF$ is a closed subspace of $\scripth_2$, of codimension one, which contains the 
image under $\diver$ of the set of all compactly supported continuously differentiable forms,
and hence by closedness contains the range of $\diver$.


\begin{lemma} \label{lemma:L2solution}
Let $d\ge 1$. 
Let $\phi$ satisfy the hypotheses 
\eqref{phiH1},\eqref{phiH2},\eqref{phiH3},\eqref{phiH4}.
There exist constants $a,C<\infty$ with the following properties.
Let $\lambda$ be sufficiently large, let $\xi\in\complex$, and suppose that 
$f\in\scripth_2$ satisfies $\int_{\reals^d} f\,dm=0$.
There exists a $1$-form $u\in\scripth_1$ satisfying
\begin{align}
& \diver(u) = f
\ \text{ on $\reals^d$},
\\& \int_{\reals^d} |u(x)|^2 e^{-4\Phi(x)-2\gamma(x)}\,dm(x) 
\le C \int_{\reals^d} |f(x)|^2 e^{-4\Phi(x)-\gamma(x)}\,dm(x).
\end{align}
\end{lemma}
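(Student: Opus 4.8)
The standard weighted $L^2$-theory (Hörmander) produces solutions to $\diver(u)=f$ with bounds against a weight $e^{-\Phi}$ when $\Phi$ is \emph{convex}; here $\Phi$ is strictly concave, so one cannot integrate by parts in the usual way and obtain a positive quadratic form. The remedy is to solve the equation in a bounded region, where concavity can be compensated by the auxiliary weight $\gamma$ and by the codimension-one restriction $\int f=0$, and then to pass to the limit over an exhaustion of $\reals^d$. So the first step is to reduce to balls $B_R=\{|x-x^\dagger|<R\}$: one shows that for $f\in\frakF$ there is a solution $u_R$ supported in $\overline{B_R}$ with $\diver(u_R)=f$ on $B_R$ (not merely up to a constant — this is where $\int f=0$ is used, via the solvability of the divergence equation on a bounded domain with zero-mean data, e.g.\ Bogovskii's operator), and with a quantitative bound independent of $R$.

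\medskip
\noindent\textbf{The quantitative estimate.} The heart of the matter is the $R$-independent estimate. Following the duality/Hahn–Banach scheme for constructing solutions of underdetermined equations, it suffices to prove an a priori inequality of the form
\begin{equation}
\Big| \int_{\reals^d} f\,g\, e^{-4\Phi-\gamma}\,dm \Big|^2
\le C \Big(\int_{\reals^d} |f|^2 e^{-4\Phi-\gamma}\,dm\Big)\cdot \norm{\diver^*(g)}_{\scripth_1}^2
\end{equation}
for all $g$ in (the relevant core of) the domain of $\diver^*$ that are orthogonal to the kernel of $\diver^*$ — equivalently, one must bound $\norm{g}_{\scripth_2}$ by $\norm{\diver^*(g)}_{\scripth_1}$ modulo the one-dimensional space spanned by $e^{\gamma}$ (the formal solution of $\diver^*(g)=0$ is $g$ with $(-\p_{x_j}+4\p_{x_j}\Phi+\p_{x_j}\gamma)(e^{-\gamma}g)=0$, i.e.\ $e^{-4\Phi-\gamma}g$ constant, which fails to lie in $\scripth_2$; but near-kernel elements must be controlled). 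Expanding $\norm{\diver^*(g)}_{\scripth_1}^2 = \sum_j \int |(-\p_{x_j}+4\p_{x_j}\Phi+\p_{x_j}\gamma)g|^2 e^{4\Phi}\,dm$ and integrating by parts, the cross terms produce $\sum_{j,k}\int (\p_{x_j x_k}^2(2\Phi)+\tfrac12\p^2_{x_j x_k}\gamma + \dots)\,|g|^2 e^{4\Phi}\,dm$ plus first-order remainders. Since the Hessian of $2\Phi$ is $\le -c\lambda I$, this term has the \emph{wrong} sign. The key observation is that $\p^2_{x_j x_k}\gamma$ is \emph{positive} on the region $|x-x^\dagger|\lesssim 1$: $\gamma=a\ln(1+|x-x^\dagger|^2)$ has Hessian $\frac{2a}{1+r^2}I - \frac{4a(x-x^\dagger)\otimes(x-x^\dagger)}{(1+r^2)^2}$, which is $\ge c(a)\,I$ for $r\le \tfrac12$; and more importantly, on the far region $r\gtrsim 1$ the weight $e^{4\Phi}=e^{-c\lambda r^2+O(\lambda r)}$ decays so fast that its contribution, after using Poincaré-type inequalities adapted to the measure $e^{4\Phi}\,dm$ (a Gaussian-type measure, for which a spectral gap holds with constant $\sim\lambda^{-1}$), is negligible compared to the $\gamma$-Hessian term on the core. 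Quantitatively, I would split $g = g_0 + g_1$ where $g_0$ is (a smooth cutoff times) the projection onto constants-times-$e^{4\Phi}$ — the near-kernel — and estimate $g_1$ using the spectral gap: $\int |g_1|^2 e^{4\Phi}\gamma_{\mathrm{Hess}}\,dm \gtrsim a\lambda^{-1}\int|g_1|^2 e^{4\Phi}\,dm$ must be weighed against the bad Hessian term $\sim \lambda \int |g_1|^2 e^{4\Phi}\,dm$; this is why $a$ must be taken large (to win the constant) and why one cannot do better than a large constant — this is exactly the subconvexity obstruction flagged in the Outline. The $\int f=0$ condition enters because $g_0$ (constant $\times\, e^{4\Phi}$, appropriately truncated) pairs with $f$ to give $\approx \mathrm{const}\cdot\int f = 0$ up to the truncation error, which is controlled again by the Gaussian decay.

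\medskip
\noindent\textbf{Passing to the limit and the main obstacle.} Once the $R$-independent bound $\norm{u_R}_{\scripth_1}\le C\norm{f}_{\scripth_2}$ is in hand, a weak-$*$ compactness argument in $\scripth_1$ extracts a subsequence $u_{R}\rightharpoonup u$; the equation $\diver(u)=f$ passes to the limit in the sense of distributions on all of $\reals^d$, and lower semicontinuity of the norm gives the stated bound. One should check that the domain issue (we want $u$ in the graph-closure of compactly supported forms, but this is automatic since $\scripth_2$ is the right target and the $u_R$ are compactly supported) causes no trouble; alternatively run the duality argument directly on $\reals^d$ with the a priori inequality above and skip the exhaustion, provided one is careful that the formal adjoint computation \eqref{eq:diverstarformula} is justified on a dense domain. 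I expect the main obstacle to be the quantitative a priori inequality — specifically, dominating the concave (wrong-sign) Hessian of $2\Phi$ by the $\gamma$-term together with the Gaussian Poincaré inequality, and tracking how large $a$ must be as a function of $d$ (and checking $a$ can be chosen independent of $\lambda,\xi$). The first-order error terms from the integration by parts, $\p_{x_j}\Phi$ and $\p_{x_j}\gamma$ times $|g|^2$, are handled by Cauchy–Schwarz absorbing them into the good second-order terms, at the cost of enlarging $a$ and $C$; these are routine but must be done with the $\lambda$-dependence in view, since $\p_{x_j}\Phi$ is $O(\lambda)$ on the core and must not overwhelm the $O(\lambda)$ Hessian — here the Gaussian localization $|x-x^\dagger|\lesssim\lambda^{-1/2}$ of the bulk of $e^{4\Phi}$ is what saves the day, making $\p_{x_j}\Phi = O(\lambda\cdot\lambda^{-1/2})=O(\lambda^{1/2})$ effectively.
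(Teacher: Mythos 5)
Your approach and the paper's are fundamentally different, and yours has a genuine gap. The paper proves Lemma~\ref{lemma:L2solution} by first reducing to the a priori inequality of Lemma~\ref{lemma:weightedlower} (invoking H\"ormander's abstract Lemmas~4.1.1--4.1.2, so no exhaustion of $\reals^d$ by balls and no weak-$*$ passage to the limit are needed), and then proves Lemma~\ref{lemma:weightedlower} \emph{without} any Bochner--Kodaira-type integration by parts. Instead, after the substitution $g = fe^{-2\Phi}$, it writes down a pointwise integral representation for $g$ obtained from the fundamental theorem of calculus along line segments, integrates over the foot point against the weight $e^{4\Phi+\gamma}\,dm$, and uses the zero-mean constraint $\int ge^{2\Phi}\,dm=0$ to cancel the boundary term. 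This produces the kernel bound of Lemma~\ref{lemma:gSgbound}, whose kernel $I(x,u)$ is estimated by exploiting concavity of $\Phi^*$ and Gaussian decay, and the $L^2$ bound then follows from the Hardy--Littlewood maximal inequality and a Schur-type argument. The paper's Outline warns explicitly that ``the standard integration-by-parts analysis'' does not deliver the estimate for subconvex weights; you have built the proof around precisely that analysis.

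The concrete defect in your argument is the quantitative balance in the integration by parts. After expanding $\norm{\diver^* g}_{\scripth_1}^2$ one gets (schematically) $\int|\nabla g|^2 + \int \big( |2\nabla\Phi+\nabla\gamma|^2 + 2\Delta\Phi + \Delta\gamma\big)|g|^2$. On the core region $|x-x^\dagger|\lesssim\lambda^{-1/2}$, the bad term $2\Delta\Phi$ is of order $-\lambda$, while $\Delta\gamma$ is $O(a)$ \emph{uniformly in $\lambda$} (since $\gamma$ does not depend on $\lambda$), so the $\gamma$-Hessian alone cannot compensate the concavity of $\Phi$ for $a$ fixed. You acknowledge this --- writing that the good contribution from $\gamma$ is $\sim a\lambda^{-1}$ and the bad one $\sim\lambda$, a factor-$\lambda^2$ mismatch --- but then conclude ``this is why $a$ must be taken large,'' which does not help since $a$ must be chosen independently of $\lambda$. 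The only hope of salvaging the IBP route is to extract the missing factor of $\lambda$ from $\int|\nabla g|^2$ via a Poincar\'e-type inequality together with the zero-mean constraint on the core; but your write-up conflates the Poincar\'e constant for the Gaussian measure (which is indeed $\sim\lambda^{-1}$, giving $\int|\nabla g|^2\gtrsim\lambda\int|g-\bar g|^2$) with the Hessian of $\gamma$, and never actually closes the estimate: you do not track how the zero-mean condition $\int ge^{2\Phi}=0$ removes the offending mean on the core, nor how the first-order cross terms (which you defer to ``routine'' Cauchy--Schwarz absorption) interact with the tight $\lambda$-balance. As written the argument does not establish the inequality; the role of $\gamma$ in the paper is to ensure integrability at infinity, not to repair the wrong-sign Hessian, and the difficulty you flag in your own last paragraph is exactly the one that remains unresolved.
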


Recall that $a$ is the parameter that appears in the definition \eqref{eq:gammadefn} of $\gamma$.
It will be essential for the ensuing argument that $a,C$ may be chosen to be independnet
of $\lambda,\xi$.

Only the real part of $\xi$ enters into the formulation of Lemma~\ref{lemma:L2solution}, so throughout its
proof we will assume that $\xi\in\reals^d$.  The main step in that proof will be the following lemma,
whose justification is deferred until \S\ref{section:finalproof}.

\begin{lemma} \label{lemma:weightedlower}
Let $\phi$ satisfy the hypotheses \eqref{phiH1},\eqref{phiH2},\eqref{phiH3},\eqref{phiH4}.
If the exponent $a$ is chosen to be sufficiently large then
there exists $C<\infty$ such that for all sufficiently large $\lambda$ and all $\xi\in\reals^d$,
for any function $f$ in the intersection of $\frakF$ with the domain of $\diver^*$,
\begin{equation}\label{ineq:weightedlower} \norm{f}_{\scripth_2}\le C\norm{\diver^*f}_{\scripth_1}.\end{equation}
\end{lemma}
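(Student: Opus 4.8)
The plan is to establish the coercive estimate \eqref{ineq:weightedlower} by a direct Hilbert-space argument, reducing it to a Poincar\'e-type inequality for the divergence operator in the weighted spaces $\scripth_1,\scripth_2$. Since the weight $e^{-4\Phi}$ is rapidly decaying (a Gaussian centered at $x^\dagger$), it is natural first to conjugate the problem: writing $g = e^{-2\Phi-\gamma/2}f$ and absorbing weights into the unknown, the operator $\diver^*$ in the weighted inner product becomes, up to conjugation by $e^{\pm(2\Phi+\gamma/2)}$, a first-order operator of the schematic form $-\nabla + (\text{lower order})$ acting on scalar functions, mapping into vector fields, and the desired estimate \eqref{ineq:weightedlower} becomes an $L^2$ lower bound $\norm{g}_{L^2}\le C\norm{Pg}_{L^2}$ for this conjugated operator $P$, restricted to the (conjugated) subspace corresponding to $\frakF$. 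The crucial structural fact is that $\diver^*f$ controls all first derivatives of $f$, not just one combination: indeed from \eqref{eq:diverstarformula}, $\norm{\diver^*f}_{\scripth_1}^2 = \int |(-\nabla + 4\nabla\Phi + \nabla\gamma)f|^2 e^{4\Phi+2\gamma}e^{-4\Phi-2\gamma}\,dm \cdot(\text{correcting the }\gamma\text{ mismatch})$; after the $e^{\gamma}$ factor in \eqref{eq:diverstarformula} is accounted for, one gets $\int |\nabla f - (4\nabla\Phi+\nabla\gamma)f|^2 e^{-4\Phi-\gamma}\,dm$ up to constants, so the full gradient of $f$ appears.

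The key steps, in order, are as follows. First I would integrate by parts in $\norm{\diver^* f}_{\scripth_1}^2$ to expose a Bochner-type identity: expanding the square $|\nabla f - \vec{b}f|^2$ with $\vec b = 4\nabla\Phi + \nabla\gamma$, the cross term $-2\Re\int \langle\nabla f, \vec b f\rangle\,e^{-4\Phi-\gamma}$ integrates by parts to produce, after the weight derivatives cancel against part of $\vec b$, a zeroth-order term proportional to $\int (\diver\vec b + \text{correction})|f|^2 e^{-4\Phi-\gamma}\,dm$. Since $\Phi$ has Hessian comparable to $-\lambda I$, we have $\diver(4\nabla\Phi) = 4\Delta\Phi \asymp -\lambda d$; the sign is \emph{wrong} for a positive zeroth-order term. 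This is exactly the obstruction flagged in the outline (the weight is concave, not convex), and it is why the standard H\"ormander integration-by-parts fails. So the second step is to exploit the auxiliary weight $\gamma$ and, more importantly, the codimension-one constraint $f\in\frakF$. The term $\int e^{-\gamma}\,dm$ being finite means $e^{-\gamma/2}\in L^2$; the constraint $\int f\,dm = 0$ says $f$ is orthogonal in $\scripth_2$ to the function $e^{4\Phi+\gamma}$ (since $\int f\,dm = \int f\cdot e^{4\Phi+\gamma}\cdot e^{-4\Phi-\gamma}\,dm = \langle f, e^{4\Phi+\gamma}\rangle_{\scripth_2}$, and $e^{4\Phi+\gamma}$ — wait, this need not be in $\scripth_2$; one must instead pair against a fixed bump-like element whose precise form is dictated by the weights). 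Orthogonality to this one direction should recover a Poincar\'e inequality: on the orthogonal complement of the ``ground state,'' the quadratic form gains a spectral gap.

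The cleanest route, and the one I would pursue, is to treat $\diver^*$ as a perturbation of the model operator obtained by freezing $\phi(x) = \tfrac12 |x-x^\dagger|^2$ (or more precisely by replacing the Hessian of $\Phi$ by a constant multiple of $-\lambda I$), for which the relevant conjugated operator is an exactly solvable Ornstein--Uhlenbeck / harmonic-oscillator-type operator whose spectrum is explicit, with a one-dimensional kernel spanned by the Gaussian ground state; the inequality \eqref{ineq:weightedlower} for the model is then the statement that the first nonzero eigenvalue is bounded below by a constant independent of $\lambda$ (after the correct $\lambda$-rescaling $x\mapsto \lambda^{-1/2}x$, which is where choosing $\gamma$ \emph{not} scaled in $\lambda$ matters — $\gamma$ contributes a $\lambda$-independent lower-order term that breaks the would-be scaling invariance in exactly the direction needed). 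Then one absorbs the error between the true $\Phi$ and the model: because $\nabla^2\Phi = \lambda\nabla^2(\Re\xi\cdot x - \phi)$ and $\nabla^2\phi$ is comparable to the identity with $C^\infty$ (hence locally Lipschitz) entries, the difference is controlled on the scale $|x-x^\dagger|\lesssim\lambda^{-1/2}$ where the Gaussian weight is concentrated, and the tails are killed by $e^{-c\lambda|x-x^\dagger|^2}$ together with the polynomial gain from $\gamma$. The main obstacle, as emphasized in the outline, is precisely this: the wrong-sign zeroth-order term means no term-by-term positivity is available, so the estimate genuinely relies on the spectral gap of the model operator plus a robust perturbation argument, rather than on a pointwise Bochner inequality — and one must be careful that all constants (the gap, the perturbation bounds, the exponent $a$) are uniform in both $\lambda$ and $\Re\xi$, which is what the hypotheses \eqref{phiH3},\eqref{phiH4} are there to guarantee. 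I expect the details of making the perturbation argument uniform in $\xi$ (equivalently, in $x^\dagger$, which ranges over all of $\reals^d$) to be the most delicate bookkeeping, handled by the translation-covariance of the setup in $x^\dagger$ together with the uniform comparability of $\nabla^2\phi$.
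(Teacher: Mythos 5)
Your strategy is genuinely different from the paper's, and in its present form it has a real gap at the perturbation step.

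The paper does not use any spectral or perturbation argument. After the same conjugation $g=e^{-2\Phi}f$ (reducing the claim to $\int|g|^2e^{-\gamma}\le C\int|e^{2\Phi+\gamma}\nabla(e^{-2\Phi-\gamma}g)|^2$ subject to $\int ge^{2\Phi}=0$), it writes the fundamental theorem of calculus along the segment from $y$ to $x$ for $e^{-2\Phi-\gamma}g$, multiplies by $e^{4\Phi(y)+\gamma(y)}$, and integrates over $y\in\reals^d$. The moment condition kills the boundary term $\int ge^{2\Phi}\,dm$ exactly, leaving a pointwise representation $|g(x)|\lesssim \lambda^{d/2}\int|u|\,I(x,u)|Sg(x+u)|\,dm(u)$ with $Sg=e^{2\Phi+\gamma}\nabla(e^{-2\Phi-\gamma}g)$. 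The kernel $I(x,u)$ is then estimated using only that $\Phi^*=\Phi-M_\Phi+\gamma$ is concave, nonpositive, and $\lesssim -c\lambda|x-x^\dagger|^2$; the resulting operator is bounded on weighted $L^2$ by the Hardy--Littlewood maximal function plus a Gaussian convolution. Crucially, the only quantitative inputs are the two-sided Hessian bound \eqref{phiH4} and the mean-zero condition; no Taylor expansion of $\phi$ about $x^\dagger$, and no higher-derivative bounds, enter.

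Your route---freeze $\phi$ to a quadratic at $x^\dagger$, obtain a spectral gap for the resulting Ornstein--Uhlenbeck-type model on the orthogonal complement of the ground state $e^{4\Phi+\gamma}$ (your orthogonality identification $\langle f,e^{4\Phi+\gamma}\rangle_{\scripth_2}=\int f\,dm$ is in fact correct, and $e^{4\Phi+\gamma}\in\scripth_2$, so you need not have hedged), then absorb the remainder by perturbation---stalls at the absorption step. The difference between $\nabla^2\Phi$ and the frozen Hessian $\lambda\nabla^2\phi(x^\dagger)$ is controlled by $\lambda\,|x-x^\dagger|\,\|\nabla^3\phi\|_{\infty}$ near $x^\dagger$, so making the perturbation small relative to the spectral gap uniformly in $\lambda$ and in $x^\dagger$ (which ranges over all of $\reals^d$ as $\Re\xi$ varies) requires a \emph{uniform} bound on third derivatives of $\phi$, which is not among hypotheses \eqref{phiH1}--\eqref{phiH4} and is not used in the paper. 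Without it, the operator-norm error is not small, and the spectral gap of the model does not transfer. You would either have to add such a hypothesis (weakening the theorem), or replace the crude perturbation step by something that, like the paper's argument, exploits the global concavity of $\Phi^*$ rather than a local quadratic approximation. So the approach is plausible in outline but is not a proof as written, and closing the gap would pull you toward the paper's integral-representation mechanism anyway.
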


According to Lemmas~4.1.1 and 4.1.2 of \cite{hormander},
it follows that the range of $\diver$, as a closed unbounded linear operator
from $\scripth_1$ to $\scripth_2$, equals $\frakF$,  and moreover that for any $f\in \frakF$ there exists $u\in \scripth_1$
satisfying $\diver(u)=f$ with $\norm{u}_{\scripth_1}\le C\norm{f}_{\scripth_2}$, with $C$ independent
of $\xi\in\reals^d$ and $\lambda\in\reals^+$ provided that $\lambda$ is sufficiently large.
Lemma~\ref{lemma:L2solution} is thus a corollary of Lemma~\ref{lemma:weightedlower}.

A solution of the divergence equation with additional desirable properties
can be obtained, and will be needed in the application below.
Because the divergence equation is undetermined, one cannot hope that arbitrary
solutions will have favorable properties; it is necessary to select an appropriate solution.
Consider the operator $T=\diver\circ\diver^*$, 
which satisfies $\norm{f}_{\scripth_1} \le C\norm{Tf}_{\scripth_1}$
since for $f$ in the intersection of $\frakF$ with the domain of $T$, 
\[\norm{Tf}_{\scripth_2}\norm{f}_{\scripth_2}
\ge \langle Tf,f\rangle_{\scripth_2} = \norm{\diver^* f}_{\scripth_1}^2 \ge C\norm{f}_{\scripth_2}^2.\]
$T$ is self-adjoint, and because of this inequality, maps the intersection of $\frakF$ with its domain
onto $\frakF$. 


\begin{lemma} \label{lemma:betterdiversoln}
Let $\phi$ satisfy the hypotheses \eqref{phiH1},\eqref{phiH2},\eqref{phiH3},\eqref{phiH4}.
Let the parameter $a$ be sufficiently large. Then for all sufficiently large $\lambda\in\reals+$,
all $\xi\in\complex^d$, and any $\phi,f$ satisfying the hypotheses of Lemma~\ref{lemma:L2solution}, 
there exists a solution of $\diver(u)=f$ satisfying 
\begin{equation} \int_{\reals^d} \big(|u(x)|^2 + |\nabla u(x)|^2\big) e^{-4\Phi(x)-3\gamma(x)}\,dm(x) 
\le C \lambda^C \int_{\reals^d} |f(x)|^2 e^{-4\Phi(x)-\gamma(x)}\,dm(x).  \end{equation}
\end{lemma}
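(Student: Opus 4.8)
The plan is to upgrade the $L^2$ solution provided by Lemma~\ref{lemma:L2solution} to one with control on first derivatives, at the cost of an extra power of $\gamma$ in the weight and a polynomial factor $\lambda^C$ on the right-hand side. The natural route is to work with the canonical solution $u = \diver^*v$, where $v$ solves $Tv = \diver\circ\diver^* v = f$ with $v\in\frakF$; existence and the bound $\norm{v}_{\scripth_2}\le C\norm{f}_{\scripth_2}$ follow from Lemma~\ref{lemma:weightedlower} together with the self-adjointness of $T$ as discussed just before the statement. Then $u=\diver^* v$ automatically satisfies $\diver(u)=f$ and $\norm{u}_{\scripth_1}^2 = \langle Tv,v\rangle_{\scripth_2} = \langle f,v\rangle_{\scripth_2}\le C\norm{f}_{\scripth_2}^2$. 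The point of choosing the canonical solution, rather than an arbitrary one, is that it solves a genuine (degenerate-)elliptic second-order equation, so interior elliptic regularity becomes available; an arbitrary divergence-free perturbation could be wildly non-smooth.

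Next I would derive the gradient bound by a local elliptic estimate applied to the equation $Tv=f$, i.e. $-\diver(e^{\gamma}\cdot\text{(first order)}v) = f$ after unwinding \eqref{eq:diverstarformula} and the definition of $\diver$; schematically this is $e^{\gamma}\big(-\Delta v + (\text{lower order in }\lambda)v\big) = f$ with coefficients that are smooth and, after rescaling, comparable to constants on unit balls, but whose derivatives grow polynomially in $\lambda$ (from the $\lambda$'s in $\Phi=\lambda(\Re\xi\cdot x-\phi)$) and polynomially in $|x-x^\dagger|$ (from $\gamma$ and its derivatives, which are bounded by $C/(1+|x-x^\dagger|)$, and from $\nabla\Phi$, which grows linearly). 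Applying the standard Caccioppoli/Cordes–Nirenberg type interior estimate on balls $B(x_0,1)$, one bounds $\norm{\nabla v}_{L^2(B(x_0,1/2))}$ by $C\lambda^C(1+|x_0-x^\dagger|)^C\big(\norm{v}_{L^2(B(x_0,1))} + \norm{f}_{L^2(B(x_0,1))}\big)$; differentiating the equation once more gives the same control on $\nabla^2 v$, hence on $\nabla u = \nabla\diver^* v$, which involves $\nabla^2 v$, $\nabla v$ and $v$ times the (polynomially controlled) coefficients. Then one multiplies these local estimates by the weight $e^{-4\Phi(x_0)-3\gamma(x_0)}$ and sums (integrates) over a lattice of centers $x_0$: because the weight $e^{-4\Phi}$ varies by at most a bounded factor over a unit ball, and because the polynomial factors $(1+|x-x^\dagger|)^C$ can be absorbed by increasing $a$ (they are dominated by a fixed extra power of $e^{\gamma}$, since $e^{\gamma}=(1+|x-x^\dagger|^2)^a$ with $a$ large), one arrives at
\[\int_{\reals^d}\big(|u|^2+|\nabla u|^2\big)e^{-4\Phi-3\gamma}\,dm \le C\lambda^C\int_{\reals^d}\big(|v|^2+|f|^2\big)e^{-4\Phi-\gamma}\,dm,\]
and the first term on the right is controlled by $\norm{f}_{\scripth_2}^2$ via the already-established $L^2$ bound for $v$.

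The main obstacle is the bookkeeping of the $\lambda$-dependence and the $|x-x^\dagger|$-dependence through the elliptic estimate: the operator $T$ has coefficients of size $O(\lambda)$ (from $\nabla\Phi$), so a naive elliptic estimate on a unit ball would cost an exponential factor $e^{c\lambda}$ unless one rescales to balls of radius $\sim\lambda^{-1}$ — but on such small balls the $\gamma$-weight and the $e^{-4\Phi}$-weight are essentially constant, which is exactly what is needed, at the price of finitely many powers of $\lambda$ from the rescaling (each derivative in the rescaled variables brings a factor $\lambda$). One must check that only finitely many such powers accumulate, i.e. that a fixed finite amount of regularity (two derivatives of $v$) suffices, so that the total loss is $\lambda^C$ for a fixed $C=C(d)$ rather than something worse. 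The other point requiring a little care is that $v$ a priori lies only in the graph-norm closure of compactly supported forms, so one first justifies that $v$ is a genuine strong solution and that the local estimates apply (a standard Friedrichs mollification / difference-quotient argument, using that the coefficients of $T$ are $C^\infty$), after which the global weighted bound follows by the summation argument sketched above. The absorption of polynomial growth in $|x-x^\dagger|$ into the extra factor $e^{-\gamma}$ (that is, into the gap between the weights $e^{-2\gamma}$ in $\scripth_1$ and $e^{-3\gamma}$ in the conclusion) is the reason the statement tolerates a heavier weight than Lemma~\ref{lemma:L2solution}, and choosing $a$ large once more disposes of it.
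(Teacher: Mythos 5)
Your overall strategy coincides with the paper's: take the canonical solution $u=\diver^* h$ where $\diver\diver^* h = f$, use the equation to gain elliptic regularity, run interior estimates on small balls, and pay for the growth of the coefficients with the factor $\lambda^C$ on the right and the extra power of $e^{-\gamma}$ in the weight on the left. That is exactly what the paper does.

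However, there is a concrete gap in the choice of ball radius. You assert at one point that ``the weight $e^{-4\Phi}$ varies by at most a bounded factor over a unit ball,'' and later, after noticing that unit balls lose a factor $e^{c\lambda}$, you propose rescaling to balls of radius $\sim\lambda^{-1}$ and claim that on such balls ``the $\gamma$-weight and the $e^{-4\Phi}$-weight are essentially constant.'' Neither claim is correct. Since $\Phi(x)=\lambda(\Re\xi\cdot x-\phi(x))$ one has $|\nabla\Phi(x)|\asymp\lambda|x-x^\dagger|$, so over a ball of radius $r$ centered at $x_0$ the oscillation of $\Phi$ is roughly $\lambda|x_0-x^\dagger|\,r+\lambda r^2$. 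With $r=1$ this gives $e^{-4\Phi}$ varying by a factor $\exp(c\lambda(1+|x_0-x^\dagger|))$, and with $r=\lambda^{-1}$ it still varies by a factor $\exp(c|x_0-x^\dagger|)$, which is exponential in $|x_0-x^\dagger|$ and therefore cannot be absorbed by any fixed polynomial power of $(1+|x_0-x^\dagger|)$ — i.e., cannot be absorbed into the $e^{-\gamma}$ gap, no matter how large $a$ is. To make $e^{-4\Phi}$ genuinely comparable to a constant on each covering ball one must also shrink the radius as $|x_0-x^\dagger|$ grows; the paper takes $r\sim\lambda^{-1}(1+|x_0-x^\dagger|)^{-1}$, which makes the oscillation of $\Phi$ uniformly $O(1)$. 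The price of using these even smaller balls is a further polynomial loss in $(1+|x_0-x^\dagger|)$ from the rescaling of derivatives, and that loss is what the extra $e^{-\gamma}$ in the weight is there to absorb (together with $\lambda^C$ for the $\lambda$-rescaling). With this correction to the ball radius your argument goes through and matches the paper's proof.
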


\begin{proof}
Solve $\diver\diver^*(h)=f$, and set $u=\diver^*(h)$. This is a second order elliptic equation for $h$,
whose coefficients are $O(\lambda^2+|x|^2)$, together with all of their derivatives.
Cover $\reals^d$ by a union of suitable balls, so that if $x$ belongs to a ball $B$ in this cover
then the radius of $B$ is comparable to $\lambda^{-1} (1+|x-x^\dagger|)^{-1}$. 
This ensures that
\begin{equation*} \max_B e^{-4 \Phi} \le C\min_B e^{-4\Phi} \end{equation*}
for a finite constant $C$ independent of $\lambda,\xi,B$.
Apply standard elliptic regularity estimates in each ball to obtain upper bounds for the second partial
derivatives of $h$, and sum the results.
The extra factor $e^{-\gamma(x)}$ on the left-hand side of the inequality, together with the factor $\lambda^C$
on the right-hand side, compensate for the resulting losses due to growth in the coefficients as $\lambda\cdot(1+|x|)\to\infty$.
\end{proof}


\section{Absence of near-resonances} 

\subsection{Exponential decay implies absence of near-resonances}
For $\xi\in\complex^d$ and $\lambda\in\reals^+$ define 
\begin{equation} \scriptf(\xi,\lambda) = \int_{\reals^d} e^{2\lambda(\xi\cdot x-\phi(x))}\,dm(x).\end{equation}

The goal of this section is to establish the following lemma, which links the off-diagonal
behavior of  Bergman kernels with the function $\scriptf$. In a subsequent step we will establish
a link between $\scriptf$ and the metric $\phi$.
\begin{proposition} \label{prop:scriptfbounds}
Suppose that there exist sequences $\lambda_\nu,A_\nu\in\reals^+$ and  $\xi_\nu\in\complex^d$  such that
\begin{align}
& \lambda_\nu\to\infty,
\\& A_\nu\to +\infty,
\\&\Re(\xi_\nu)\to \xi^*\in\reals^d,
\\&\Im(\xi_\nu)\to 0
\\&|\scriptf(\xi_\nu,\lambda_\nu)|\le e^{-A_\nu\lambda_\nu}.
\end{align}
Define $x^*\in\reals^d$ by 
\begin{equation} \nabla \phi(x^*) = \xi^*.\end{equation}
Then there is no neighborhood of $x^*$ in $\complex^d$ in which $(B_{\lambda_\nu}: \nu\in\naturals)$
decays exponentially fast away from the diagonal.
\end{proposition}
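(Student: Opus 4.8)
The plan is to argue by contradiction: suppose that $B_{\lambda_\nu}$ does decay exponentially on some neighborhood $U'$ of $x^*$ in $\complex^d$, away from the diagonal, and derive from the near-vanishing of $\scriptf(\xi_\nu,\lambda_\nu)$ a holomorphic section of $L^{\lambda_\nu}$ that is both of non-negligible norm and nearly orthogonal to $H^2(X,L^{\lambda_\nu})$, which is absurd. First I would fix notation: because $\Re(\xi_\nu)\to\xi^*$ and the Hessian of $\phi$ is comparable to the identity, the maximizers $x^\dagger=x^\dagger_{\xi_\nu,\lambda_\nu}$ of $\Phi_{\xi_\nu,\lambda_\nu}$ converge to $x^*$ defined by $\nabla\phi(x^*)=\xi^*$; all the Gaussian concentration from Section~5 is thus centered near $x^*$. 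The starting observation is that $\scriptf(\xi_\nu,\lambda_\nu)=\int_{\reals^d}e^{2\lambda_\nu(\xi_\nu\cdot x-\phi(x))}\,dm(x)$ is exactly the obstruction $\int_{\reals^d}f\,dm$ to solving the divergence equation \eqref{divergenceeqn} with $f(x)=e^{2\lambda_\nu(\xi_\nu\cdot x-\phi(x))}$. Since this integral is only exponentially small rather than zero, I would first subtract a correction: pick a fixed bump $\chi$ supported near $x^*$ with $\int\chi=1$, replace $f$ by $\tilde f=f-\scriptf(\xi_\nu,\lambda_\nu)\,\chi$, which has mean zero and, by the hypothesis $|\scriptf(\xi_\nu,\lambda_\nu)|\le e^{-A_\nu\lambda_\nu}$ together with the $\scripth_2$-bound on $\chi$ absorbed into the factor $e^{-4\Phi}$, satisfies $\|\tilde f\|_{\scripth_2}\le C$ (the same bound as for $f$ itself) up to an exponentially small error.

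Next I would invoke Lemma~\ref{lemma:betterdiversoln} to solve $\diver(u)=\tilde f$ on $\reals^d$ with
\[\int_{\reals^d}\big(|u|^2+|\nabla u|^2\big)e^{-4\Phi-3\gamma}\,dm\le C\lambda_\nu^{C}\|\tilde f\|_{\scripth_2}^2\le C\lambda_\nu^{C}.\]
Translating back through \eqref{eq:adjointequation}, the $(0,1)$--form $\omega_\nu(x+iy)=e^{i\lambda_\nu\xi_\nu\cdot y}\big(e^{\lambda_\nu\xi_\nu\cdot x-2\lambda_\nu\phi(x)}u(x)\big)$ — more precisely the form whose components are those of $e^{-\lambda_\nu\xi_\nu\cdot x+2\lambda_\nu\phi(x)}\cdot(\text{that quantity})$, matching the convention in \eqref{eq:adjointequation} — has the property that $\dbar^*_{2\lambda_\nu\phi}(\text{something})$ equals $e^{i\lambda_\nu\xi_\nu\cdot y}\tilde f(x)$, while $e^{i\lambda_\nu\xi_\nu\cdot y}\tilde f(x)$ differs from the formal holomorphic function $\psi_\nu(z)=e^{\lambda_\nu\xi_\nu\cdot z}$ only by the exponentially small multiple of $e^{i\lambda_\nu\xi_\nu\cdot y}\chi(x)$. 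Here the point of the weight $e^{-4\Phi}$, i.e. the factor $e^{4\lambda_\nu(\xi_\nu\cdot x-\phi(x))}$, is exactly that it converts the weighted-$L^2$ bound on $u$ into a bound on the $L^2(\complex^d,e^{-2\lambda_\nu\phi})$ norm of the resulting section/form, with the slight exponential growth of $e^{i\lambda_\nu\xi_\nu\cdot y}$ in the imaginary directions controlled because $\Im(\xi_\nu)\to 0$: on the relevant compact range of $y$ this contributes only $e^{o(\lambda_\nu)}$.

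From here the contradiction is assembled as in the scalar case. Localize: multiply $\psi_\nu$ by a cutoff $\eta$ equal to $1$ on a neighborhood of $x^*$ and supported in $U'$; then $\eta\psi_\nu$ is a holomorphic section up to the error $(\dbar\eta)\psi_\nu$ supported in the annular region where $|x-x^\dagger|$ is bounded below, hence where $e^{4\Phi}$ is exponentially small, so this error has exponentially small $L^2(\complex^d,e^{-2\lambda_\nu\phi})$ norm. On the other hand $B_{\lambda_\nu}(\eta\psi_\nu)$ is the holomorphic section we test against itself: writing $\eta\psi_\nu = \dbar^*_{2\lambda_\nu\phi}(\cdot) + (\text{holomorphic}) + (\text{small})$, pairing with any holomorphic section and using that $B_{\lambda_\nu}$ has kernel $B_{\lambda_\nu}(z,z')$ which by the contradiction hypothesis is $O(e^{-c\lambda_\nu})$ for $z$ near $x^*$ and $z'$ away from $x^*$, together with the exponential localization of $\psi_\nu e^{-2\lambda_\nu\phi}$ near $x^*$, forces $B_{\lambda_\nu}(\eta\psi_\nu)$ to have exponentially small norm — whereas a direct stationary-phase computation (the Gaussian integral around $x^\dagger$, using \eqref{phiH4}) shows $\|\psi_\nu e^{-\lambda_\nu\phi}\|_{L^2}$ is only polynomially small, so after subtracting the holomorphic projection one still retains a non-exponentially-small piece. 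This is the required contradiction. I expect the main obstacle to be the second step: bookkeeping the passage from the real divergence solution $u$ back to a genuine section on $\complex^d$ and verifying that the weight $e^{-4\Phi}$ in $\scripth_1$ exactly matches the weight $e^{-2\lambda_\nu\phi}$ needed on $\complex^d$ — and simultaneously that the factor $\lambda_\nu^{C}$ from Lemma~\ref{lemma:betterdiversoln} and the $e^{o(\lambda_\nu)}$ growth from $e^{i\lambda_\nu\xi_\nu\cdot y}$ are both swallowed by the exponential gains from $e^{-A_\nu\lambda_\nu}$ and from $e^{4\Phi}$ off the center; keeping all these competing exponential and polynomial rates straight, uniformly in $\nu$, is the delicate part.
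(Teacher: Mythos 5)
Your overall skeleton is right — argue by contradiction, subtract a mean-zero correction, solve the divergence equation to realize $\psi_\nu$ (up to an exponentially small error) as $\dbar^*_{2\lambda_\nu\phi}$ of a form with weighted $L^2$ bounds, localize with a cutoff $\eta$, and exploit the assumed off-diagonal decay of the Bergman kernel between the core set $W$ (where $\eta\equiv1$) and the annular set $V'\supset\operatorname{supp}\nabla\eta$. Your mean-zero correction uses a fixed bump $\chi$ in place of the paper's $e^{2\Phi_\nu}$; that is a cosmetic variation, though you do have to track the polynomial-in-$\lambda$ growth of the $\scripth_2$-norm of a fixed bump, since $e^{-4\Phi}$ is of order $e^{c\lambda|x-x^\dagger|^2}$ on its support.

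However, the final step of your argument does not close, and the gap is exactly where the paper introduces a second, genuinely different, PDE step that your proposal omits: solving a $\dbar$ equation with a modified weight. Here is why your direct route is circular. Write $F_\nu=\dbar^*_{2\lambda_\nu\phi}(\eta v_\nu)$. Then $\eta\psi_\nu = F_\nu + g_\nu + (\text{exp.\ small})$, where $g_\nu$ consists of the $\nabla\eta$-Leibniz terms and is supported in $V'$. Since $F_\nu$ lies in the range of $\dbar^*$, $B_{\lambda_\nu}F_\nu=0$, so $B_{\lambda_\nu}(\eta\psi_\nu)=B_{\lambda_\nu}(g_\nu)+(\text{small})$, and by the kernel decay between $W$ and $V'$ this is indeed exponentially small in $W$ — this part of your reasoning is fine. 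But now $(I-B_{\lambda_\nu})(\eta\psi_\nu)=\eta\psi_\nu-B_{\lambda_\nu}(\eta\psi_\nu)=\psi_\nu-(\text{small})$ in $W$, and on the other hand $(I-B_{\lambda_\nu})(\eta\psi_\nu)=F_\nu+(I-B_{\lambda_\nu})g_\nu+(\text{small})=\psi_\nu-(\text{small})$ in $W$ as well (using $F_\nu=\psi_\nu-b_\nu(\cdots)$ there and $g_\nu=0$ there). So the two sides of the decomposition reproduce $\psi_\nu=\psi_\nu+(\text{small})$ and no contradiction is obtained. The problem is that $g_\nu$, though supported in $V'$, has norm comparable to $\lambda_\nu^{d/2}e^{C\lambda_\nu|\Im\xi_\nu|}\|\psi_\nu\|_{L^2(V,e^{-2\lambda_\nu\phi})}$ — not small — so $(I-B_{\lambda_\nu})g_\nu$ is not controllable by what you have, and ``$\eta\psi_\nu = \dbar^*(\cdot)+(\text{holomorphic})+(\text{small})$'' is not a decomposition your construction has produced.

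What is missing is the following step from the paper. Choose a plurisubharmonic $\tilde\phi\le\phi$ with $\tilde\phi\equiv\phi$ near $\operatorname{supp}\nabla\eta$ and $\tilde\phi\le\phi-\eps$ outside $V'$, and solve $\dbar G_\nu=\dbar F_\nu$ in $L^2(\complex^d,e^{-2\lambda_\nu\tilde\phi})$ by the standard H\"ormander theory. Because $\dbar F_\nu$ is supported near $\operatorname{supp}\nabla\eta$ where $\tilde\phi=\phi$, the right-hand side of the weighted estimate is the same as before; but because $\tilde\phi\le\phi-\eps$ off $V'$, the resulting $G_\nu$ is exponentially smaller than $F_\nu$ outside $V'$, in particular in $W$. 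Since $G_\nu-F_\nu$ is holomorphic, $(I-B_{\lambda_\nu})G_\nu=F_\nu$, so in $W$ one has $\psi_\nu=G_\nu-B_{\lambda_\nu}G_\nu+b_\nu(\cdots)$. Now \emph{both} terms are exponentially small in $W$: $G_\nu$ by the improved weight, and $B_{\lambda_\nu}G_\nu$ by splitting $G_\nu=\one_{V'}G_\nu+\one_{\complex^d\setminus V'}G_\nu$ — the first piece is killed in $W$ by the kernel decay between $W$ and $V'$, the second because $G_\nu$ is already small off $V'$. This, combined with the lower bound $\|\psi_\nu\|_{L^2(W)}\ge e^{-C\lambda_\nu|\Im\xi_\nu|}\|\psi_\nu\|_{L^2(V)}$ and $\Im\xi_\nu\to0$, is what produces the contradiction. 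Without the $G_\nu$ step you have no function that is simultaneously a small perturbation of $\psi_\nu$ in $W$ (modulo the range of $\dbar^*$) and globally small off $V'$, and the argument cannot conclude.
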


\subsection{Beginning of the proof of Proposition~\ref{prop:scriptfbounds}}

\begin{proof}
Suppose to the contrary that $(\lambda_\nu)$, $(A_\nu)$, $(\xi_\nu)$ are sequences with the stated properties.
Writing $z=x+iy\in\reals^d+i\reals^d$, define
\begin{align}
\psi_\nu(z) &= e^{\lambda_\nu z\cdot\xi_\nu},
\\
f_\nu(x) &= e^{2\lambda_\nu (x\cdot\xi_\nu-\phi(x))}
\end{align}
where $z\cdot\xi = \sum_{j=1}^d z_j\xi_j$.
Let $\gamma_\nu(x) = a\ln(1+|x-x_\nu|^2)$ in \eqref{eq:gammadefn}, 
where $x_\nu$ is the unique solution of
\begin{equation}\nabla\phi(x_\nu)=\Re\xi_\nu.\end{equation}
Since $\Re\xi_\nu\to\xi^*\in\reals^d$, $x_\nu\to x^*$.

Set \begin{equation} \Phi_\nu(x) = \lambda_\nu(x\cdot\Re\xi_\nu-\phi(x)). \end{equation}
Set \[b_\nu = (\int_{\reals^d} e^{2\Phi_\nu})^{-1}\int_{\reals^d} f_\nu 
= (\int_{\reals^d} e^{2\Phi_\nu})^{-1}\scriptf(\xi_\nu,\lambda_\nu).\]
Then $\int_{\reals^d} (f_\nu-b_\nu e^{2\Phi_\nu})\,dm=0$, and $f_\nu-b_\nu e^{2\Phi_\nu}\in \scripth_2$,
so this function belongs to the range of the divergence operator. 
Let $u_\nu: \reals^d\to\complex$ be the solution of the equation 
\begin{equation} \diver(u_\nu) = f_\nu - b_\nu e^{2\Phi_\nu} \end{equation}
guaranteed by Lemma~\ref{lemma:betterdiversoln}. 
The Lemma gives 
\begin{align*}
\int_{\reals^d} \big(|u_\nu(x)|^2  &+ |\nabla u_\nu(x)|^2\big)  e^{-4\Phi_\nu(x)-3\gamma_\nu(x)}\,dm(x) 
\\  &\le C \int_{\reals^d} |f_\nu(x)|^2 e^{-4\Phi_\nu(x)-\gamma_\nu(x)}\,dm(x) 
+ C\lambda^d|\int_{\reals^d}f_\nu\,dm|^2
\\ & \le C + Ce^{-2A_\nu\lambda_\nu} \lambda^d
\\& \le C 
\end{align*}
where $C<\infty$ is independent of $\nu$.

Defining 
\begin{equation}
v_\nu(z) = e^{i\lambda_\nu y\cdot\xi_\nu} e^{-\lambda_\nu (x\cdot\xi_\nu-2\phi(x))}u_\nu(z),
\end{equation}
these conclusions become, uniformly for $y\in\reals^d$:
\begin{gather}
\dbar^*_{2\lambda_\nu\phi} v_\nu(x+iy) = \psi_\nu(z)  - b_\nu e^{i\lambda_\nu y\cdot\xi_\nu} e^{\lambda_\nu x\cdot(2\Re\xi_\nu-\xi_\nu)}
\\
\int_{\reals^d} \big(|v_\nu(x+iy)|^2 + |\nabla v_\nu(x+iy)|^2\big) 
e^{-2\lambda_\nu x\cdot\Re\xi_\nu} e^{-2\gamma_\nu(x)}\,dm(x) 
\le C e^{2\lambda_\nu|\Im(\xi_\nu)|\cdot|y|}.
\end{gather}

Let $V\subset\complex^d$ be a ball centered at $x^*$, independent of $\nu$, to be chosen below. 
Then
\begin{equation} \label{psinunorm}
\norm{\psi_\nu}_{L^2(V,e^{-2\lambda\phi})}^2 \asymp \lambda_\nu^{-d}e^{2\lambda_\nu(\xi_\nu\cdot\Re\xi_\nu-\phi(x_\nu))}
\end{equation}
for all sufficiently large $\nu$, since $x_\nu\to x^* \in V$.
The preceding can be rewritten in the more illuminating form
\begin{multline}
\int_{\reals^d} |v_\nu(x+iy)|^2 e^{-2\lambda_\nu ((x-x_\nu)\cdot\Re\xi_\nu-(\phi(x)-\phi(x_\nu))-3\gamma_\nu(x)} 
e^{-2\lambda_\nu \phi(x)} \,dm(x) 
\\
\le C e^{2\lambda_\nu|\Im(\xi_\nu)|\cdot|y|}  e^{2\lambda_\nu (x_\nu\cdot\Re\xi_\nu-\phi(x_\nu))}
\\
\asymp  \lambda_\nu^{d} e^{2\lambda_\nu|\Im(\xi_\nu)|\cdot|y|}  \norm{\psi_\nu}_{L^2(V,e^{-2\lambda_\nu\phi})}^2.
\end{multline}
The left-hand side is the norm squared $\int_{\reals^d} |v_\nu(x+iy)|^2 e^{-2\lambda_\nu\phi(x)}\,dm(x)$,
with an advantageous supplementary weight in the integral of the form
\[e^{-2\lambda_\nu ((x-x_\nu)\cdot\Re\xi_\nu-(\phi(x)-\phi(x_\nu))-3\gamma_\nu(x)} \ge e^{c\lambda_\nu |x-x_\nu|^2}.\]

However, this weight is of no help in overcoming the
disadvantageous factor $\lambda_\nu^d e^{2\lambda_\nu|\Im(\xi_\nu)|\cdot|y|}$ on the right-hand side
when $x\approx x_\nu$ but $y\ne 0$;
overcoming that factor is the crucial issue. Therefore this supplementary factor will be
of no further use, so will be dropped.  Accordingly, one has
\begin{multline}
\int_{\reals^d} \big(|v_\nu(x+iy)|^2 
 + |\nabla v_\nu(x+iy)|^2 \big)
e^{-2\lambda_\nu \phi(x)} \,dm(x) 
\\ \le C\lambda_\nu^{d} e^{2\lambda_\nu|\Im(\xi_\nu)|\cdot|y|}  \norm{\psi_\nu}_{L^2(V,e^{-2\lambda_\nu\phi})}^2.
\end{multline}

\subsection{Localized solutions $F_\nu$}
Without loss of generality we may assume by change of coordinates that $x_\nu\to x^*=0$. 
Let $\eta\in C^\infty_0(\complex^d)$ be a function which is identically equal to $1$
in a neighborhood of $0$, and is supported within an open neighborhood of $0$ in which the Bergman kernels 
$B_{\lambda_\nu}$ decay exponentially fast away from the diagonal as $\nu$ (and hence $\lambda_\nu$) tends
to infinity. Let $V$ be a bounded open set containing the support of $\eta$.  
Consider the functions $F_\nu:\complex^d\to\complex$ defined by
\begin{equation} F_\nu = \dbar^*_{2\lambda_\nu\phi}(\eta v_\nu).\end{equation} 
These are supported in $V$, which is a fixed bounded set independent of $\nu$.

Let $W\Subset W'$  and $V'$ be bounded open subsets of $\complex^d$
such that $0\in W$, 
the closure of $W$ is contained in $W'$, 
$\eta\equiv 1$ in a neighborhood of the closure of $W'$,
the closure of $V'$ is disjoint from the closure of $W'$,
and the support of $\nabla\eta$ is contained in $V'$.
For all sufficiently large indices $\nu$,
\begin{multline}
\norm{F_\nu-\eta \psi_\nu}_{L^2(\complex^d, e^{-2\lambda_\nu\phi})} 
+ \norm{\nabla F_\nu-\nabla(\eta \psi_\nu)}_{L^2(\complex^d, e^{-2\lambda_\nu\phi})} 
\\ \le \lambda_\nu^C e^{C\lambda_\nu|\Im(\xi_\nu)|}\norm{\psi_\nu}_{L^2(V,e^{-2\lambda_\nu\phi})}, 
\end{multline}
and
\begin{multline}
\norm{F_\nu-\eta \psi_\nu}_{L^2(W', e^{-2\lambda_\nu\phi})} 
+ \norm{\nabla F_\nu-\nabla(\eta \psi_\nu)}_{L^2(W', e^{-2\lambda_\nu\phi})} 
\\ \le \lambda_\nu^C e^{\lambda_\nu(-cA_\nu+C|\Im(\xi_\nu)|}\norm{\psi_\nu}_{L^2(V,e^{-2\lambda_\nu\phi})}.
\end{multline}
In particular, since $\dbar\psi_\nu=0$,
\begin{align}
&\norm{\dbar F_\nu}_{L^2(\complex^d, e^{-2\lambda_\nu\phi})} 
\le \lambda_\nu^C e^{C\lambda_\nu|\Im(\xi_\nu)|}\norm{\psi_\nu}
\label{Fnuglobalbound}
\\
&\norm{\dbar F_\nu}_{L^2(W', e^{-2\lambda_\nu\phi})} 
\le \lambda_\nu^C e^{\lambda_\nu(-cA_\nu+C|\Im(\xi_\nu)|)}\norm{\psi_\nu}.
\label{FnuWprimebound}
\end{align}


\subsection{Solution of a final $\dbar$ equation}
The hypothesis that the Bergman kernels decay exponentially away from the diagonal will be applied
not to the sequence $\psi_\nu$, but to a certain related sequence of functions $G_\nu$. These will be constructed
by solving a final $\dbar$ equation. These are not of the product form $e^{i\lambda_\nu y\cdot \xi_\nu}f_\nu(x)$.
To prepare for their construction, choose a $C^\infty$ function $\tilde\phi:\complex^d\to\reals$ 
and a constant $\eps>0$ with the following properties:
\begin{enumerate}
\item $\tilde\phi$ is plurisubharmonic.
\item $\tilde\phi\le \phi$.
\item $\tilde\phi\equiv\phi$ in a neighborhood of the support of $\nabla\eta$.
\item There exists $\eps>0$ such that $\tilde\phi(z) \le \phi(z) - \eps$ for all $z\in \complex^d\setminus V'$.
\end{enumerate}
These exist, because $\phi$ is strictly plurisubharmonic.

The right-hand side in our $\dbar$ equation will be $\dbar F_\nu$.
The norm of $\dbar F_\nu$ is still under satisfactory control with respect to the modified weight $\tilde\phi$:
\begin{equation} \label{eq:Ftildephibound}
\norm{\dbar F_\nu}_{L^2(\complex^d,e^{-2\lambda_\nu\tilde\phi})}
\le e^{C\lambda_\nu|\Im(\xi_\nu)|} \norm{\psi_\nu}_{L^2(V,e^{-2\lambda_\nu\phi})}
\end{equation}
for all sufficiently large $\nu$. Note that the norm on the left-hand side is with respect
to $\tilde\phi$, while only $\phi$ appears on the right-hand side.
This relies on \eqref{Fnuglobalbound} and \eqref{FnuWprimebound}, together with the crucial assumption\footnote{ 
What is actually needed is not that $A_\nu\to\infty$,
but rather that $\limsup_{\nu\to\infty}A_\nu$ is larger than a certain constant which is
determined by $\phi$, the neighborhood in which the Bergman kernels are assumed to decay exponentially,
and their rate of exponential decay.}
that $A_\nu\to\infty$ as $\nu\to\infty$. 

\begin{lemma} \label{lemma:weightedtilde}
Let $\tilde\phi$ have the properties listed.  For each sufficiently large $\nu$ there exists a solution $G_\nu$ of the equation 
\begin{equation}\label{eq:Gnudefn} \dbar G_\nu = \dbar F_\nu\end{equation} 
satisfying
\begin{equation} \int_{\complex^d} |G_\nu|^2 e^{-2\lambda_\nu \tilde\phi}e^{-\gamma}\,dm
\le C \int_{\complex^d} |\dbar F_\nu|^2 e^{-2\lambda_\nu \tilde\phi}\,dm.  \end{equation}
\end{lemma}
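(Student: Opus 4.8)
The plan is to solve the equation $\dbar G_\nu = \dbar F_\nu$ by the standard Hörmander $L^2$ method for $\dbar$, but applied with the modified weight $e^{-2\lambda_\nu\tilde\phi}e^{-\gamma}$ rather than $e^{-2\lambda_\nu\phi}$. The right-hand side $\dbar F_\nu$ is a $\dbar$-closed $(0,1)$-form (being $\dbar$ of a function), and by \eqref{eq:Ftildephibound} it has finite norm in $L^2(\complex^d,e^{-2\lambda_\nu\tilde\phi})$. Hörmander's theorem (\cite{hormander}, Theorem 4.4.2, or its weighted refinement) produces a solution $G_\nu$ of $\dbar G_\nu = \dbar F_\nu$ with
\[\int_{\complex^d}|G_\nu|^2 e^{-2\lambda_\nu\tilde\phi}e^{-\gamma}\,dm \le \int_{\complex^d}|\dbar F_\nu|^2 e^{-2\lambda_\nu\tilde\phi}e^{-\gamma}\,\Lambda^{-1}\,dm,\]
where $\Lambda$ is a lower bound for the smallest eigenvalue of the complex Hessian of the total weight $2\lambda_\nu\tilde\phi + \gamma$, provided that Hessian is positive. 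So the task reduces to checking that $2\lambda_\nu\tilde\phi+\gamma$ is plurisubharmonic with a suitable lower bound on its complex Hessian, and absorbing the resulting factor $\Lambda^{-1}$ and the extra $e^{-\gamma}$ on the right into the constant $C$.

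First I would verify the curvature. Since $\tilde\phi$ is plurisubharmonic by property (1), the complex Hessian of $2\lambda_\nu\tilde\phi$ is nonnegative. The auxiliary weight $\gamma(x)=a\ln(1+|x-x_\nu|^2)$ depends on $x$ alone, so its complex Hessian $\partial_{z_j}\partial_{\bar z_k}\gamma = \tfrac14\,\partial_{x_j}\partial_{x_k}\gamma$ equals one quarter of its real Hessian; this is comparable to $a(1+|x-x_\nu|^2)^{-1}$ times the identity near $x_\nu$ and decays like $a(1+|x-x_\nu|^2)^{-2}$ at infinity, so it is not bounded below by a positive constant globally. To repair this I would instead use the strict plurisubharmonicity of the \emph{full} weight by noting that on the set $V'$ where $\tilde\phi=\phi$ (a neighborhood of $\mathrm{supp}\,\nabla\eta$, hence where $\dbar F_\nu$ is actually supported), the complex Hessian of $2\lambda_\nu\tilde\phi=2\lambda_\nu\phi$ is already $\ge c\lambda_\nu$ times the identity by hypothesis \eqref{phiH4}; and off $V'$ the form $\dbar F_\nu$ vanishes, so the right-hand side integral is really only over $V'$. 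Thus one may first add a large multiple of a smooth strictly plurisubharmonic bounded exterior function, or simply observe that since $\dbar F_\nu$ is supported in $V'$ it suffices to solve with a weight that is strictly plurisubharmonic there; Hörmander's theorem then gives $\Lambda^{-1}\le C\lambda_\nu^{-1}$ on the support of the data, which is even better than needed.

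The main obstacle I anticipate is the bookkeeping of the two different weights $\tilde\phi$ and $\phi$, and the precise form of the extra $e^{-\gamma}$ factors on the two sides of the asserted inequality. The cleanest route is: apply Hörmander with weight $2\lambda_\nu\tilde\phi + \gamma + \chi$, where $\chi$ is a fixed smooth bounded strictly plurisubharmonic function with complex Hessian $\ge I$ (for instance built from $\log(1+|z|^2)$ cut off, or simply $|z|^2$ composed with a bounded modification), so that the total Hessian is $\ge I$ globally; this yields a solution $G_\nu$ with $\int |G_\nu|^2 e^{-2\lambda_\nu\tilde\phi-\gamma-\chi}\,dm \le \int|\dbar F_\nu|^2 e^{-2\lambda_\nu\tilde\phi-\gamma-\chi}\,dm$. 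Since $\chi$ and $\gamma$ are bounded below on the support $V'$ of $\dbar F_\nu$, and $\chi$ is bounded above globally while $e^{-\gamma}\le 1$, one has $e^{-\gamma-\chi}\le C$ everywhere and $e^{-\gamma-\chi}\ge c$ on $V'$; hence the right-hand side is $\le C\int_{V'}|\dbar F_\nu|^2 e^{-2\lambda_\nu\tilde\phi}\,dm = C\int_{\complex^d}|\dbar F_\nu|^2 e^{-2\lambda_\nu\tilde\phi}\,dm$, and the left-hand side dominates $c\int|G_\nu|^2 e^{-2\lambda_\nu\tilde\phi-\gamma}\,dm$. This is exactly the claimed estimate with a new constant $C$ independent of $\nu$, and the proof is complete.
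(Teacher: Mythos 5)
Your overall approach---solve the equation by H\"ormander's weighted $L^2$ theory for $\dbar$---is exactly what the paper does; its proof consists of the single sentence ``A direct application of the well-known weighted theory for the $\dbar$ equation suffices.'' So you have correctly identified the method, and you have also correctly identified the one point that needs checking, namely the plurisubharmonicity of the weight and the size of its complex Hessian. However, your proposed repair of that difficulty does not work.

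You propose to add to the weight a ``fixed smooth bounded strictly plurisubharmonic function $\chi$ with complex Hessian $\ge I$.'' No such function exists on $\complex^d$: a $C^2$ function with $\partial\dbar\chi\ge I$ satisfies $\Delta\chi\ge 4d$, which forces $\chi(z)\ge\chi(0)+|z|^2-C$ and rules out boundedness. Your two suggested constructions reflect this obstruction: $\log(1+|z|^2)$ has complex Hessian tending to zero, and cutting it off destroys plurisubharmonicity; $|z|^2$ has Hessian $=I$ but is unbounded, and any bounded modification of it must have Hessian becoming nonpositive somewhere. Your other suggested shortcut---that it suffices to have strict plurisubharmonicity only on the support $V'$ of $\dbar F_\nu$---also fails, because H\"ormander's estimate requires the Hessian lower bound on the whole domain where the $L^2$ bound is asserted, not merely where the data lives.

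The clean way to make your strategy work is to notice that the function $\tilde\phi$ may be \emph{constructed} to be uniformly strictly plurisubharmonic. Since $\phi$ has complex Hessian comparable to the identity, one may take $\tilde\phi=\phi-\eps\chi_0$ where $\chi_0\in C^\infty$ is a fixed cutoff with $0\le\chi_0\le1$, $\chi_0\equiv0$ near $\operatorname{supp}\nabla\eta$, $\chi_0\equiv1$ outside $V'$, and $\eps>0$ chosen so small that the Hessian of $\eps\chi_0$ does not overwhelm that of $\phi$. This $\tilde\phi$ satisfies all four listed properties and in addition has complex Hessian $\ge c_0 I$ uniformly. Then $2\lambda_\nu\tilde\phi$ has Hessian $\ge 2c_0\lambda_\nu I$, and H\"ormander's theorem directly gives a solution with $\int|G_\nu|^2 e^{-2\lambda_\nu\tilde\phi}\,dm\le (2c_0\lambda_\nu)^{-1}\int|\dbar F_\nu|^2 e^{-2\lambda_\nu\tilde\phi}\,dm$, from which the stated inequality (with the harmless extra factor $e^{-\gamma}\le1$ on the left) is immediate. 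There is then no need to incorporate $\gamma$ into the plurisubharmonic weight at all, which sidesteps the problem you ran into: $\gamma(x)=a\ln(1+|x-x_\nu|^2)$, as a function of $\Re z$ alone, is in fact \emph{not} plurisubharmonic on $\complex^d$, so putting it into the weight would have created a further difficulty rather than resolved one.
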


\begin{proof}
A direct application of the well-known weighted theory for the $\dbar$ equation \cite{hormander} suffices.
\end{proof}

For each sufficiently large $\nu$, choose a solution $G_\nu$ of \eqref{eq:Gnudefn}.
Concerning these functions, two consequences of Lemma~\ref{lemma:weightedtilde} together with \eqref{eq:Ftildephibound}
will be useful.  Firstly, in the whole space
\begin{equation} \label{eq:Gglobalbound}
\int_{\complex^d} |G_\nu|^2 e^{-2\lambda_\nu\phi}\,dm
\le C\lambda_\nu^C e^{C\lambda_\nu|\Im(\xi_\nu)|}\norm{\psi_\nu}^2_{L^2(V,e^{-2\lambda_\nu\phi})}.
\end{equation}
Secondly, in the complement of $V'$ there is the improved upper bound 
\begin{equation} \label{eq:GWbound}
\int_{\complex^d\setminus V'} |G_\nu|^2 e^{-2\lambda_\nu\phi}\,dm
\le e^{-c\lambda_\nu} e^{C\lambda_\nu|\Im(\xi_\nu)|}\norm{\psi_\nu}^2_{L^2(V,e^{-2\lambda_\nu\phi})}.
\end{equation}

\subsection{Arrival at a contradiction}
We now complete the proof of Proposition~\ref{prop:scriptfbounds}, modulo the deferred proof of Lemma~\ref{lemma:weightedlower}, 
by showing how its hypotheses, together with the assumption that the Bergman kernels $B_{\lambda_\nu}$ decay exponentially
fast away from the diagonal, lead to a contradiction.
Throughout the discussion, it is assumed that $\nu$ is sufficiently large.
An upper bound of the form ``$O(M)$ in $W$'' 
indicates a function whose norm in $L^2(W,e^{-2\lambda_\nu\phi})$ is $O(M)$, uniformly in $\nu$.

Recalling that
$F_\nu= \dbar^*_{2\lambda_\nu\phi}(\eta v_\nu)$ and that $\dbar G_\nu=\dbar F_\nu$, the equation 
\begin{equation} 
G_\nu = (G_\nu-\dbar^*_{2\lambda_\nu\phi}(\eta v_\nu)) \ +\ \dbar^*_{2\lambda_\nu\phi}(\eta v_\nu) \end{equation}
expresses $G_\nu$ as the sum of an element of the nullspace of $\dbar$ plus a function orthogonal to that nullspace.
Therefore
\begin{equation} (I-B_{\lambda_\nu})G_\nu = \dbar^*_{2\lambda_\nu\phi}(\eta v_\nu).  \end{equation}
Consequently
\begin{equation} (I-B_{\lambda_\nu})G_\nu = \dbar^*_{2\lambda_\nu\phi}v_\nu  \ \text{in $W$} \end{equation}
since $\eta\equiv 1$ in $W$.

Since $\dbar^*_{2\lambda_\nu\phi}v_\nu =\psi_\nu
- b_\nu e^{i\lambda_\nu y\cdot\xi_\nu} e^{\lambda_\nu x\cdot(2\Re\xi_\nu-\xi_\nu)}$
and $|b_\nu|\le e^{-cA_\nu\lambda_\nu}$,
\begin{equation}
(I-B_{\lambda_\nu})G_\nu =  \psi_\nu\ +O(e^{-c\lambda_\nu A_\nu}\norm{\psi_\nu}_{L^2(V,e^{-2\lambda_\nu\phi})})\  \text{ in $W$.}
\end{equation}
Using the strong bound provided by
inequality \eqref{eq:GWbound}  in the complement of $V'$, and in particular in $W$, this can be rewritten as
\begin{equation} \label{eq:psinuboundatlast}
\psi_\nu = 
-B_{\lambda_\nu}G_\nu + O\big(e^{-c\lambda_\nu}e^{-C\lambda_\nu|\Im(\xi_\nu)|} 
\norm{\psi_\nu}_{L^2(V,e^{-2\lambda_\nu\phi})}\big)\ \text{in $W$.} 
\end{equation}

Let $\one_{V'}$ denote the indicator function of $V'$.
Because $B_{\lambda_\nu}$ is a contraction on $L^2(\complex^d,e^{-2\lambda_\nu\phi})$,
\begin{align*}
\norm{B_{\lambda_\nu}G_\nu}_{L^2(W,e^{-2\lambda_\nu\phi})}
&\le
\norm{B_{\lambda_\nu}(\one_{V'} G_\nu)}_{L^2(W,e^{-2\lambda_\nu\phi})}
+ \norm{B_{\lambda_\nu} (\one_{\complex\setminus V'} G_\nu)}_{L^2(\complex^d,e^{-2\lambda_\nu\phi})}
\\& \le
\norm{B_{\lambda_\nu}(\one_{V'} G_\nu)}_{L^2(W,e^{-2\lambda_\nu\phi})}
+ \norm{G_\nu}_{L^2(\complex^d\setminus V',e^{-2\lambda_\nu\phi})}
\\& \le
\norm{B_{\lambda_\nu}(\one_{V'} G_\nu)}_{L^2(W,e^{-2\lambda_\nu\phi})}
+ 
e^{-c\lambda_\nu} e^{C\lambda_\nu|\Im(\xi_\nu)|}\norm{\psi_\nu}_{L^2(V,e^{-2\lambda_\nu\phi})}.
\end{align*}
The sets $W,V'$ were constructed to have disjoint closures, and so that both are contained
in a region in which the Bergman kernels $B_{\lambda_\nu}$
decay exponentially fast away from the diagonal. Therefore for all sufficiently large $\nu$,
\begin{align*}
\norm{B_{\lambda_\nu}(\one_{V'} G_\nu)}_{L^2(W,e^{-2\lambda_\nu\phi})}
&\le e^{-c\lambda_\nu} \norm{G_\nu}_{L^2(V',e^{-2\lambda_\nu\phi})}
\\&\le e^{-c\lambda_\nu} e^{C\lambda_\nu|\Im(\xi_\nu)|}\norm{\psi_\nu}_{L^2(V,e^{-2\lambda_\nu\phi})}.
\end{align*}
Inserting these bounds into \eqref{eq:psinuboundatlast} gives
\begin{equation}
\norm{\psi_\nu}_{L^2(W,e^{-2\lambda_\nu\phi})}  
\le e^{-c\lambda_\nu} e^{C\lambda_\nu|\Im(\xi_\nu)|}\norm{\psi_\nu}_{L^2(V,e^{-2\lambda_\nu\phi})}
\end{equation}
where $c>0$.

We have normalized $\phi$ so that $\phi(0)=0$ and $\nabla\phi(0)=0$, so $\phi(x)\asymp |x|^2$.
It is thus apparent from the explicit formula $\psi_\nu(z) = e^{\lambda_\nu z\cdot\xi_\nu}$
and the assumption that $x_\nu\to 0$ that the functions $\psi_\nu$ peak near $0$ in the sense that
\begin{equation}
\norm{\psi_\nu}_{L^2(W,e^{-2\lambda_\nu\phi})} 
\ge e^{-C\lambda_\nu|\Im(\xi_\nu)|} \norm{\psi_\nu}_{L^2(V,e^{-2\lambda_\nu\phi})}. 
\end{equation}
Therefore we have shown that
\begin{equation}
\norm{\psi_\nu}_{L^2(V,e^{-2\lambda_\nu\phi})}  
\le e^{-c\lambda_\nu} e^{C\lambda_\nu|\Im(\xi_\nu)|}\norm{\psi_\nu}_{L^2(V,e^{-2\lambda_\nu\phi})}
\end{equation}
where $c>0$.
Since $|\Im(\xi_\nu)|\to 0$ as $\nu\to\infty$, this is a contradiction for all sufficiently large $\nu$.
\end{proof}

This completes the proof of Proposition~\ref{prop:scriptfbounds}, modulo the deferred proof of Lemma~\ref{lemma:weightedlower} 
concerning solvability of the divergence equation.

\section{Conclusion of the proof}
The second of the two main steps of the proof is to link properties of $\scriptf$ with analyticity of the metric $\phi$.
The strong convexity of $\phi$ implies that the mapping $x\mapsto \nabla\phi(x)$ from $\reals^d$ to $\reals^d$
is a bijection.
Define
the function $\tau:\reals^d\to\reals^d$  to be the inverse of $\nabla\phi$, that is, 
\begin{equation} \label{taudefn} \nabla\phi(\tau(\xi)) = \xi.\end{equation}

This subsection is devoted to the proof of the following result.
\begin{proposition} \label{prop:step2}
Let $(\lambda_\nu: \nu\in\naturals)$ be a sequence of positive real numbers tending to infinity.
Suppose that $(B_{\lambda_\nu}: \nu\in\naturals)$ 
decays exponentially fast away from the diagonal, in some neighborhood of $a\in\reals^d$.  
Then the function $\tau$ is real analytic in some neighborhood of $\xi=\nabla \phi(a)$.
\end{proposition}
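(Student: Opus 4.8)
The plan is to connect the scalar function $\scriptf(\xi,\lambda)$ directly to the Legendre transform of $\phi$, and thereby to $\tau$. First I would apply Proposition~\ref{prop:scriptfbounds} in the contrapositive: since $(B_{\lambda_\nu})$ decays exponentially fast away from the diagonal in a neighborhood of $a$, there can exist \emph{no} subsequence along which $|\scriptf(\xi_\nu,\lambda_\nu)|\le e^{-A_\nu\lambda_\nu}$ with $A_\nu\to\infty$, $\Re\xi_\nu\to\xi^*$ near $\nabla\phi(a)$, and $\Im\xi_\nu\to 0$. Pushing a little harder, this says that for $\xi$ real and in a fixed neighborhood of $\nabla\phi(a)$, the quantity $\lambda^{-1}\log|\scriptf(\xi,\lambda)|$ is bounded \emph{below}, uniformly in $\lambda=\lambda_\nu$; the matching upper bound is free, since $\scriptf(\xi,\lambda) = \int e^{2\lambda(\xi\cdot x-\phi(x))}\,dm(x)$ is dominated by $Ce^{2\lambda\,\Phi^*(\xi)}$ where $\Phi^*(\xi)=\sup_x(\xi\cdot x-\phi(x))$ is (half of) the Legendre transform. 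So $\lambda^{-1}\log|\scriptf(\cdot,\lambda)|$ is uniformly bounded above and below on a fixed real neighborhood of $\xi^*=\nabla\phi(a)$.

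Next I would upgrade this to a uniform two-sided bound on a complex neighborhood. The function $\xi\mapsto\scriptf(\xi,\lambda)$ is entire (the integral converges locally uniformly for $\xi$ in a complex ball of fixed radius, because $\Re(\xi\cdot x)-\phi(x)\to-\infty$ quadratically), and $\log|\scriptf(\xi,\lambda)|$ is pluriharmonic away from its zero set. The upper bound $\lambda^{-1}\log|\scriptf(\xi,\lambda)|\le \Re\Phi^*(\xi)+o(1)$ extends to the complex ball by the same Laplace-type estimate. Proposition~\ref{prop:scriptfbounds} rules out zeros of $\scriptf(\cdot,\lambda_\nu)$ with small imaginary part (a zero is a fortiori an exponentially small value), so on a possibly smaller complex ball $\scriptf(\cdot,\lambda_\nu)$ is zero-free and $\lambda_\nu^{-1}\log\scriptf(\cdot,\lambda_\nu)$ is a well-defined holomorphic function with real part bounded above and below uniformly in $\nu$. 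A normal families argument then extracts a subsequence along which $\lambda_\nu^{-1}\log\scriptf(\cdot,\lambda_\nu)$ converges locally uniformly to a holomorphic function $g$ on that complex ball. By a routine Laplace/stationary-phase expansion of $\scriptf(\xi,\lambda)=\int e^{2\lambda(\xi\cdot x-\phi(x))}\,dm(x)$ at its unique critical point $x=\tau(\xi)$, one computes, for real $\xi$, that $\lambda^{-1}\log\scriptf(\xi,\lambda) = 2(\xi\cdot\tau(\xi)-\phi(\tau(\xi))) + O(\lambda^{-1}\log\lambda)$, so $\Re g(\xi) = 2\big(\xi\cdot\tau(\xi)-\phi(\tau(\xi))\big) = 2\Phi^*(\xi)$ for real $\xi$ near $\xi^*$.

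Thus $\Phi^*$, the (half-)Legendre transform of $\phi$, is the restriction to the real axis of the harmonic function $\tfrac12\Re g$, hence is real analytic in a neighborhood of $\xi^*=\nabla\phi(a)$. Now I invoke the standard involutivity of the Legendre transform: since $\phi$ is strictly convex and smooth, $\nabla\Phi^*(\xi) = \tau(\xi)$, so real analyticity of $\Phi^*$ gives real analyticity of $\tau$ in a neighborhood of $\nabla\phi(a)$, which is exactly the claim.

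The main obstacle I anticipate is the passage from the pointwise non-resonance statement of Proposition~\ref{prop:scriptfbounds} to a genuinely \emph{uniform-in-$\nu$} two-sided bound on $\lambda_\nu^{-1}\log|\scriptf(\xi,\lambda_\nu)|$ on a fixed complex ball, including the exclusion of zeros: Proposition~\ref{prop:scriptfbounds} as stated only forbids sequences with $A_\nu\to\infty$, so one must argue that if $\liminf_\nu \lambda_\nu^{-1}\log|\scriptf(\xi_\nu,\lambda_\nu)|\to-\infty$ for some bad points $\xi_\nu$ then, after extracting a subsequence and using the a priori upper bound plus a Harnack/Borel--Carathéodory-type inequality for the pluriharmonic functions $\lambda_\nu^{-1}\log|\scriptf|$, one could manufacture exactly the forbidden sequence — and then verify the $\Im\xi_\nu\to0$ normalization survives. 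Getting the quantifiers right here, and ensuring the limiting $g$ is genuinely holomorphic (not merely that $\Re g$ is harmonic on a domain where $\scriptf$ might a priori have been forced to vanish), is the delicate part; the stationary-phase identification of $\Re g$ and the Legendre-transform bookkeeping are routine by comparison.
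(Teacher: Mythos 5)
Your proposal is correct and follows essentially the same route as the paper's proof: contrapositive of Proposition~\ref{prop:scriptfbounds} gives zero-freeness and a uniform two-sided bound on $\lambda_\nu^{-1}\log|\scriptf|$ on a fixed complex ball, normal families extract a real-analytic limit, stationary phase identifies it (for real $\xi$) with $\xi\cdot\tau(\xi)-\phi\circ\tau(\xi)$, and $\nabla\Phi^*=\tau$ finishes. The one simplification the paper makes, resolving the branch-of-logarithm and ``is $g$ genuinely holomorphic'' delicacies you flag, is to run the normal-families argument directly on the real-valued pluriharmonic functions $\tfrac12\lambda_\nu^{-1}\log|\scriptf(\cdot,\lambda_\nu)|$, for which the uniform two-sided real bound already yields normality and whose pluriharmonic limit is automatically real analytic, so no holomorphic logarithm is ever needed.
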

Consequently the inverse function $\nabla\phi$, and hence $\phi$ itself, are real analytic in a corresponding 
neighborhood of $a$.

By the hypothesis of exponentially fast decay we mean that there exists $\varrho>0$
such that for each $\delta>0$ there exists $c<\infty$ such that 
\begin{equation} |B_{\lambda_\nu}(z,z')|\le e^{-c\lambda_\nu} \end{equation}
for all ordered pairs of elements of $\complex^d$ satisfying
$\rho(a,z)<\varrho$, $\rho(a,z')<\varrho$, and $\delta\le\rho(z,z')$.

Let $a\in\reals^d$. 
By making the change of variables $z\mapsto z-a$ and subtracting from $\phi$ a real-valued affine function,
we may assume without loss of generality that $a=0$ and that $\phi(0)=\nabla \phi(0)=0$.

\begin{lemma} \label{lemma:pluriharmoniclimit}
Under the hypotheses of Proposition~\ref{prop:step2} with $a=0$ and $\nabla\phi(0)=0$,
there exist an open ball $\scriptb\subset\complex^d$ centered at $0$,
a sequence of indices $\nu_k$ tending to $\infty$, and a real analytic function $u:\scriptb\to\reals$ such that
\begin{equation} \tfrac12 \lambda_{\nu_k}^{-1}\log|\scriptf(\xi,\lambda_{\nu_k})|\to u(\xi) \end{equation}
uniformly in $\scriptb$ as $k\to\infty$.  \end{lemma}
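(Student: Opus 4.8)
The plan is to combine Proposition~\ref{prop:scriptfbounds}, a normal families argument, and an explicit stationary-phase identification of the limit. First I would establish a two-sided bound on $\lambda^{-1}\log|\scriptf(\xi,\lambda)|$ that is uniform in $\lambda$ for $\xi$ in a fixed ball $\scriptb$ about $0$. The lower bound is the nontrivial direction and is exactly where Proposition~\ref{prop:scriptfbounds} enters: if, for arbitrarily large $\nu$, there were $\xi_\nu$ with $\Re\xi_\nu\to\xi^*$ near $0$, $\Im\xi_\nu\to 0$, and $|\scriptf(\xi_\nu,\lambda_\nu)|\le e^{-A_\nu\lambda_\nu}$ with $A_\nu\to\infty$, then setting $x^*=\tau(\xi^*)$ (which is near $0$ since $\nabla\phi(0)=0$) we would contradict the assumed exponential decay of $B_{\lambda_\nu}$ near $0$. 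Hence, shrinking $\scriptb$ if necessary so that $\tau(\scriptb\cap\reals^d)$ stays in the neighborhood of $0$ where the decay hypothesis applies, there is a single constant $A<\infty$ so that $|\scriptf(\xi,\lambda_\nu)|\ge e^{-A\lambda_\nu}$ for all $\xi\in\scriptb$ and all large $\nu$; a matching upper bound $|\scriptf(\xi,\lambda)|\le e^{C\lambda}$ on $\scriptb$ is immediate from the definition of $\scriptf$ and the quadratic growth of $\phi$.

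Next I would invoke these bounds to run the normal families argument. Each function $g_\nu(\xi):=\tfrac12\lambda_\nu^{-1}\log|\scriptf(\xi,\lambda_\nu)|$ is pluriharmonic on $\scriptb$ (being $\Re$ of $\tfrac12\lambda_\nu^{-1}\log\scriptf$, which is holomorphic where $\scriptf\ne0$; the lower bound guarantees $\scriptf$ is zero-free on $\scriptb$ for large $\nu$), and the family $(g_\nu)$ is uniformly bounded on $\scriptb$. By Harnack-type estimates for harmonic functions (or by passing to the holomorphic logarithms, normalizing the imaginary part at $0$, and using a normal families theorem for holomorphic functions on a slightly smaller ball), a subsequence $g_{\nu_k}$ converges uniformly on compact subsets of a possibly smaller ball, again called $\scriptb$, to a pluriharmonic — hence real analytic — limit $u:\scriptb\to\reals$. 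This is the routine part.

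It remains only to identify $u$ on the real slice $\scriptb\cap\reals^d$, which pins down $u$ globally by the real-analytic identity principle (pluriharmonic functions agreeing on a totally real slice agree). For real $\xi$, $\scriptf(\xi,\lambda)=\int_{\reals^d}e^{2\lambda(\xi\cdot x-\phi(x))}\,dm(x)$ is a genuine Laplace-type integral with a nondegenerate maximum at $x=\tau(\xi)$, by strict convexity of $\phi$; a standard Laplace/stationary-phase expansion gives $\scriptf(\xi,\lambda)=(1+o(1))\,c_d\,\lambda^{-d/2}\big(\det D^2\phi(\tau(\xi))\big)^{-1/2}\,e^{2\lambda(\xi\cdot\tau(\xi)-\phi(\tau(\xi)))}$, so $\tfrac12\lambda^{-1}\log\scriptf(\xi,\lambda)\to \xi\cdot\tau(\xi)-\phi(\tau(\xi))$, which is the Legendre transform $\phi^*(\xi)$ of $\phi$ evaluated at $\xi$. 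Thus $u(\xi)=\phi^*(\xi)$ on $\scriptb\cap\reals^d$ — a real analytic function, consistent with the conclusion and setting up the final step of the paper, where $\nabla u=\tau$ is read off.

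The main obstacle is really the first step: extracting from Proposition~\ref{prop:scriptfbounds} a lower bound that is uniform in $\xi$ over a fixed ball (not merely along one convergent sequence $\xi_\nu$). The subtlety is that the neighborhood of $x^*=\tau(\xi^*)$ in which $B_{\lambda_\nu}$ must fail to decay depends on $\xi^*$; one needs $\tau(\scriptb\cap\reals^d)$ to lie inside the single fixed neighborhood of $0$ where decay is hypothesized, so $\scriptb$ must be chosen small enough in terms of the modulus of continuity of $\tau$ and the size of that neighborhood. Once $\scriptb$ is fixed this way, a diagonal/contradiction argument over all $\xi^*\in\scriptb\cap\reals^d$ upgrades the sequential statement of Proposition~\ref{prop:scriptfbounds} to the required uniform lower bound; the footnote after \eqref{eq:Ftildephibound} (that only $\limsup A_\nu$ exceeding a $\phi$-determined constant is needed) is what makes this upgrade legitimate.
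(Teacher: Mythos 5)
Your proposal reproduces the paper's proof of this lemma: a uniform two-sided bound $\big|\lambda_\nu^{-1}\log|\scriptf(\xi,\lambda_\nu)|\big|\le C$ on a small ball $\scriptb\subset\complex^d$ (the trivial upper bound \eqref{trivialexpupperbound} plus a lower bound extracted from Proposition~\ref{prop:scriptfbounds} by contradiction), followed by a normal families argument for the pluriharmonic functions $u_\nu = \tfrac12\lambda_\nu^{-1}\log|\scriptf(\cdot,\lambda_\nu)|$, with real analyticity of the limit coming from its pluriharmonicity. The stationary-phase identification $u=\phi^*$ that you append is actually the content of the paper's next step, Lemma~\ref{lemma:identifylimit}, not of this lemma. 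One small point that both you and the paper leave implicit, and which is distinct from the $A_\nu$ footnote issue you do flag: the contradiction argument for the lower bound produces a sequence $\xi_\nu\in\scriptb$ with $\Re\xi_\nu$ convergent but no reason to have $\Im\xi_\nu\to 0$; to apply the proof of Proposition~\ref{prop:scriptfbounds} one must first shrink the radius of $\scriptb$ so that $|\Im\xi|$ is small enough that the factor $e^{C\lambda_\nu|\Im(\xi_\nu)|}$ still loses to $e^{-c\lambda_\nu}$ in the concluding inequality of that proposition's proof.
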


Let $\scriptb(\varrho)$ denote the open ball in $\complex^d$ of radius $\varrho$, centered at the origin.
The functions $F_\nu(\xi)=\scriptf(\xi,\lambda_\nu)$ are all holomorphic in some common neighborhood of $\xi=0$,
independent of $\nu$. Moreover, straightforward estimation demonstrates that there exists $C<\infty$ such that 
\begin{equation}\label{trivialexpupperbound} |F_\nu(\xi)| \le e^{C\lambda_\nu}\end{equation}
for all $\xi$ in that neighborhood and for all $\nu$.

\begin{proof}[Proof of Lemma~\ref{lemma:pluriharmoniclimit}]
According to Proposition~\ref{prop:scriptfbounds}, there exists a ball $\scriptb$ centered at $0$ such that
the function $\xi\mapsto \scriptf(\xi,\lambda_\nu)$ has no zeros in $\scriptb(\varrho)$, and moreover
there exists $C<\infty$ such that
\begin{equation} \lambda_\nu^{-1} \log|F_\nu(\xi)| \ge -C\ \text{ for all $\xi\in\scriptb$}. \end{equation}
Combining this with the upper bound \eqref{trivialexpupperbound} gives
\begin{equation} \big|\,\lambda_\nu^{-1}\log|F_\nu(\xi)|\,\big|\le C \end{equation}
uniformly for all $\xi\in\scriptb$, for all sufficiently large indices $\nu$.

Since $F_\nu$ is holomorphic and zero-free in $\scriptb(\varrho)$,  $u_\nu = \tfrac12 \lambda_\nu^{-1}\log|F_\nu|$ is pluriharmonic there.
Because these functions are uniformly bounded, they form a normal family. 
Therefore after replacing $\scriptb$ by a concentric ball of strictly smaller radius,  
there exist a pluriharmonic function $u$ in $\scriptb(\varrho)$ and a sequence $\nu_k\to\infty$
such that $u_{\nu_k}\to u$
uniformly on all compact subsets of $\scriptb(\varrho)$.

Pluriharmonicity implies real analyticity.
\end{proof}

\begin{lemma} \label{lemma:identifylimit}
The function $u$ in the conclusion of Lemma~\ref{lemma:pluriharmoniclimit} is 
\begin{equation}\label{eq:identifylimit} u(\xi) = \xi\cdot\tau(\xi)-\phi\circ\tau(\xi).  \end{equation} \end{lemma}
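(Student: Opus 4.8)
The plan is to evaluate the integral $\scriptf(\xi,\lambda_\nu) = \int_{\reals^d} e^{2\lambda_\nu(\xi\cdot x - \phi(x))}\,dm(x)$ by Laplace's method (stationary phase for a real, strictly concave phase). Fix $\xi$ real in $\scriptb$; the exponent $2\lambda_\nu(\xi\cdot x - \phi(x))$ has a unique nondegenerate maximum at the point $x$ where $\nabla\phi(x) = \xi$, namely $x = \tau(\xi)$, by the definition \eqref{taudefn} of $\tau$ and the strict convexity of $\phi$. The maximum value of the exponent is $2\lambda_\nu(\xi\cdot\tau(\xi) - \phi(\tau(\xi)))$. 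Since the Hessian of $\phi$ is uniformly comparable to the identity by hypothesis \eqref{phiH4}, Laplace's method applies uniformly and gives
\begin{equation}
\scriptf(\xi,\lambda_\nu) = e^{2\lambda_\nu(\xi\cdot\tau(\xi) - \phi(\tau(\xi)))}\cdot \big(\pi/\lambda_\nu\big)^{d/2} \big(\det \nabla^2\phi(\tau(\xi))\big)^{-1/2}\big(1 + O(\lambda_\nu^{-1})\big),
\end{equation}
with the error uniform for $\xi$ in a fixed neighborhood of $0$. Taking logarithms, $\tfrac12\lambda_\nu^{-1}\log\scriptf(\xi,\lambda_\nu) = \xi\cdot\tau(\xi) - \phi(\tau(\xi)) + O(\lambda_\nu^{-1}\log\lambda_\nu)$, and the remainder term tends to $0$ as $\nu\to\infty$. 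Hence $u_{\nu_k}(\xi) \to \xi\cdot\tau(\xi) - \phi\circ\tau(\xi)$ pointwise for real $\xi$ near $0$.

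By Lemma~\ref{lemma:pluriharmoniclimit} the sequence $u_{\nu_k}$ already converges uniformly on $\scriptb$ to the real-analytic (pluriharmonic) function $u$, so the pointwise limit just computed must agree with $u$ on $\scriptb\cap\reals^d$. Two real-analytic functions on a complex ball that agree on the real slice through the center agree identically; equivalently, the right-hand side $\xi\cdot\tau(\xi) - \phi\circ\tau(\xi)$ is itself real-analytic in $\xi$ on an open subset of $\reals^d$ (since $\phi\in C^\infty$ is strictly convex, $\tau$ is smooth, so the composition is smooth and the pluriharmonic $u$ provides its analytic extension), and it coincides with $u$ on the real points of $\scriptb$. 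Therefore $u(\xi) = \xi\cdot\tau(\xi) - \phi\circ\tau(\xi)$ throughout $\scriptb$, which is \eqref{eq:identifylimit}.

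The main obstacle is making the Laplace asymptotics genuinely uniform in $\xi$ — one needs the leading-order term and the error estimate to hold with constants independent of $\xi$ as $\xi$ ranges over a neighborhood of $0$, and also to control the tail of the integral where $|x - \tau(\xi)|$ is large. Both follow cleanly from hypothesis \eqref{phiH4}: the uniform lower bound on $\nabla^2\phi$ forces $\xi\cdot x - \phi(x) \le \xi\cdot\tau(\xi) - \phi(\tau(\xi)) - c|x-\tau(\xi)|^2$ with $c$ independent of $\xi$, which both localizes the integral to a bounded neighborhood of $\tau(\xi)$ (the complementary region contributing a relatively exponentially small amount) and controls the Gaussian remainder. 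Since $\nabla^2\phi$ is also bounded above, $\tau$ maps the neighborhood of $\xi = 0$ into a bounded set, so all these estimates are genuinely uniform. The logarithmic precision is all that is needed here, so one need not track the Gaussian prefactor carefully; the crude two-sided bound $C^{-1}e^{-c_1\lambda|x-x^\dagger|^2}\le e^{2\Phi(x)}\le Ce^{-c_2\lambda|x-x^\dagger|^2}$ already recorded in \S5.1 suffices to pin down $\tfrac12\lambda_\nu^{-1}\log\scriptf$ to within $O(\lambda_\nu^{-1}\log\lambda_\nu)$.
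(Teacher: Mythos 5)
Your proof is correct and takes essentially the same route as the paper: both evaluate $\scriptf(\xi,\lambda_\nu)$ for real $\xi$ by Laplace's method at the unique critical point $\tau(\xi)$, take $\tfrac12\lambda_\nu^{-1}\log$ of the resulting asymptotic to get $u_\nu(\xi)=\xi\cdot\tau(\xi)-\phi\circ\tau(\xi)+O(\lambda_\nu^{-1}\log\lambda_\nu)$, and pass to the limit along the subsequence from Lemma~\ref{lemma:pluriharmoniclimit}. Your extra remarks on the uniformity of the Laplace expansion (via hypothesis \eqref{phiH4}) and on the identification extending from the real slice to all of $\scriptb$ by analytic continuation are sensible elaborations of what the paper leaves implicit, not a different argument.
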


\begin{proof}
Consider any $\xi\in\reals^d$.
For large $\lambda$, $\scriptf(\xi,\lambda) = \int_{\reals^d} e^{2\lambda(\xi\cdot t-\phi(t))}\,dt$ 
can be calculated via the method of real stationary phase: Set $\tau=\tau(\xi)$.  As $\lambda\to+\infty$,
\begin{multline} \int_{\reals^d} e^{2\lambda(\xi\cdot t-\phi(t))}\,dt
= c_de^{2\lambda (\xi\cdot\tau-\phi(\tau))}\lambda^{-d/2} \big(\det\nabla^2\phi(\tau)\big)^{-1/2} 
\\+ O\big(e^{2\lambda (\xi\cdot\tau-\phi(\tau))}\lambda^{-(d+2)/2}\big).  \end{multline}
Thus 
\begin{equation} \lambda_\nu^{d/2} |\scriptf(\xi,\lambda_\nu)|
=  e^{2\lambda_\nu(\xi\cdot\tau-\phi(\tau))} \big(\alpha(\xi)+O(\lambda_\nu^{-1})\big) \end{equation}
for a certain strictly positive $\alpha(\xi)$.  Taking logarithms of both sides and dividing by $\lambda_\nu$ gives
\begin{equation} u_\nu(\xi) =  \xi\cdot\tau(\xi)-\phi\circ\tau(\xi) +O(\lambda_\nu^{-1}\log\lambda_\nu) \end{equation}
as $\nu\to\infty$, since $\lambda_\nu^{-1}\log\lambda_\nu\to 0$.  
Restricting attention to the subsequence $\nu_k$ obtained above and letting $k\to\infty$ gives \eqref{eq:identifylimit}.
\end{proof}

\begin{lemma} The function $u$ defined in \eqref{eq:identifylimit} satisfies
\begin{equation} \label{simpleueqn} \nabla u\circ\nabla\phi(x) \equiv x.  \end{equation}   \end{lemma}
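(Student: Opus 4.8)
The plan is to recognize that the right-hand side of \eqref{eq:identifylimit} is precisely the Legendre transform of $\phi$, and that $\nabla u = \tau$, from which \eqref{simpleueqn} is immediate because $\tau$ was defined in \eqref{taudefn} to be the inverse of $\nabla\phi$.

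First I would record that $\tau$ is differentiable: by hypothesis \eqref{phiH4} the Hessian $\nabla^2\phi$ is everywhere invertible, so the inverse function theorem applied to the diffeomorphism $\nabla\phi:\reals^d\to\reals^d$ shows that $\tau\in C^\infty(\reals^d;\reals^d)$, with $D\tau(\xi) = \big(\nabla^2\phi(\tau(\xi))\big)^{-1}$. In particular the chain-rule manipulation below is legitimate.

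Next I would compute $\nabla u$ directly. Differentiating $u(\xi) = \xi\cdot\tau(\xi)-\phi(\tau(\xi))$ with respect to $\xi_i$ gives
\begin{equation}
\partial_{\xi_i}u(\xi) = \tau_i(\xi) + \sum_{j=1}^d \xi_j\,\partial_{\xi_i}\tau_j(\xi) - \sum_{j=1}^d \partial_{x_j}\phi(\tau(\xi))\,\partial_{\xi_i}\tau_j(\xi).
\end{equation}
By the defining relation \eqref{taudefn}, $\partial_{x_j}\phi(\tau(\xi)) = \xi_j$ for every $j$, so the second and third sums cancel, leaving $\partial_{\xi_i}u(\xi) = \tau_i(\xi)$. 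Hence $\nabla u(\xi) = \tau(\xi)$ identically on the ball where $u$ is defined (note that the formula \eqref{eq:identifylimit} was established only for real $\xi$ near $0$, which is all that is needed here).

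Finally I would substitute $\xi = \nabla\phi(x)$: since $\tau$ is by definition the inverse of the map $x\mapsto\nabla\phi(x)$, one has $\nabla u(\nabla\phi(x)) = \tau(\nabla\phi(x)) = x$, which is \eqref{simpleueqn}. I do not expect any genuine obstacle here: the only point requiring care is justifying differentiability of $\tau$, which \eqref{phiH4} supplies at once, and after that the identity is a one-line chain-rule cancellation expressing the standard duality between a strictly convex function and its Legendre transform.
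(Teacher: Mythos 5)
Your proof is correct and uses essentially the same chain-rule computation as the paper, just organized slightly differently: you differentiate $u(\xi)=\xi\cdot\tau(\xi)-\phi\circ\tau(\xi)$ directly in $\xi$ to get the Legendre identity $\nabla u = \tau$ and then substitute $\xi=\nabla\phi(x)$, whereas the paper substitutes first, differentiates in $x$, and divides out the Hessian. Both arguments reduce to the same cancellation and both ultimately invoke invertibility of $\nabla^2\phi$ (you via the inverse function theorem applied to $\tau$, the paper explicitly at the last step), so there is no substantive difference.
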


\begin{proof}
Substitute $\xi=\nabla\phi(x)$ to rewrite the conclusion of the preceding Lemma asconclusion of the preceding Lemma as
\begin{equation} u(\nabla\phi(x)) = x\cdot \nabla\phi(x) - \phi(x).  \end{equation}
Apply $\nabla=\nabla_x$ to both sides to obtain
\begin{equation}
(\nabla u\circ\nabla\phi(x))\star \nabla^2\phi(x)  = \nabla\phi(x) + x\star \nabla^2\phi(x) - \nabla \phi(x) 
= x\star \nabla^2\phi(x)  \end{equation}
where $\star$ denotes the product of a vector with a matrix.
The Hessian $\nabla^2\phi(x)$ is by the positivity hypothesis an invertible matrix for each $x$, so
the conclusion of the lemma follows.
\end{proof}

\begin{lemma}
Let $\tau$ be the inverse of the mapping $\reals^d\owns x\mapsto\nabla\phi(x)$.
If the function $u(\xi) = \xi\tau(\xi)-\phi\circ\tau(\xi)$ is real analytic in a neighborhood of $\xi_0$
then $\phi\in C^\omega$ in a neighborhood of $\tau(\xi_0)$.
\end{lemma}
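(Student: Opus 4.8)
The plan is to exhibit $\phi$ as the Legendre transform of $u$ up to obvious changes of variable, and then invoke the classical fact that the Legendre transform of a real-analytic, strictly convex function is again real-analytic. First I would check that the hypotheses already encode strict convexity of the relevant functions: since $\nabla^2\phi$ is comparable to the identity, the map $x\mapsto\nabla\phi(x)$ is a diffeomorphism of $\reals^d$ with inverse $\tau$, and $\nabla\tau(\xi) = \big(\nabla^2\phi(\tau(\xi))\big)^{-1}$ is likewise comparable to the identity; in particular $\tau$, and hence $u$, are strictly convex (one computes $\nabla^2 u(\xi) = \nabla\tau(\xi) = (\nabla^2\phi(\tau(\xi)))^{-1} > 0$ directly from \eqref{eq:identifylimit}, or more cheaply from the identity \eqref{simpleueqn}).

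Next I would use \eqref{simpleueqn}, namely $\nabla u\circ\nabla\phi(x)\equiv x$, which says that $\nabla u = \tau^{-1}$-style inverse: precisely, $\nabla u = (\nabla\phi)\circ(\nabla u)^{-1}$ is not quite it — rather \eqref{simpleueqn} says $\nabla u$ is a left inverse of $\nabla\phi$, and since both are diffeomorphisms of $\reals^d$ it is a genuine two-sided inverse, so $\nabla u = \tau$ wherever defined, i.e. $\nabla u = (\nabla\phi)^{-1}$. Therefore $\nabla\phi = (\nabla u)^{-1}$. Now the hypothesis is that $u$ is real-analytic near $\xi_0$; its gradient $\nabla u$ is then real-analytic near $\xi_0$, and by strict convexity its Jacobian $\nabla^2 u(\xi_0)$ is invertible, so the real-analytic inverse function theorem gives that $(\nabla u)^{-1}$ is real-analytic in a neighborhood of the point $x_0 := \nabla u(\xi_0) = \tau(\xi_0)$. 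Hence $\nabla\phi$ is real-analytic near $x_0$.

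Finally, to pass from analyticity of $\nabla\phi$ to analyticity of $\phi$ itself: $\phi$ is determined up to an additive constant by its gradient via integration along a path, and integrating a real-analytic vector field along, say, straight-line segments from $x_0$ produces a real-analytic function (this is just the statement that $\phi(x) = \phi(x_0) + \int_0^1 \nabla\phi(x_0 + t(x-x_0))\cdot(x-x_0)\,dt$ is real-analytic in $x$ when the integrand is jointly real-analytic in $(t,x)$, which follows from differentiating under the integral sign / convergence of the power series). Thus $\phi\in C^\omega$ in a neighborhood of $\tau(\xi_0)$, as claimed.

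I do not expect a serious obstacle here: every step is classical once one records that $u$ is strictly convex and that $\nabla u = (\nabla\phi)^{-1}$, the latter being precisely the content of \eqref{simpleueqn} combined with the bijectivity of $\nabla\phi$. The only point requiring a modicum of care is the real-analytic inverse function theorem — one should note that real-analyticity of a map with invertible differential is preserved under inversion, which is standard (e.g. by the analytic implicit function theorem, or by observing that the usual Neumann-series construction of the inverse converges in a complex neighborhood). The integration step is then immediate.
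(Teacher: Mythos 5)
Your proposal is correct and takes essentially the same route as the paper: use \eqref{simpleueqn} to see that $\nabla u$ is (locally) the inverse of $\nabla\phi$, apply the real-analytic inverse function theorem to conclude that $\nabla\phi$ is real analytic near $\tau(\xi_0)$, and then pass to $\phi$ itself. The opening framing about Legendre transforms is dropped in favor of exactly this argument, and the extra detail you supply (that the Jacobian $\nabla^2 u = (\nabla^2\phi\circ\tau)^{-1}$ is invertible, that the left inverse is a two-sided inverse since $\nabla\phi$ is a bijection, and that analyticity of $\phi$ follows from analyticity of $\nabla\phi$ by integrating along segments) simply fleshes out what the paper states in one line.
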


\begin{proof}
By \eqref{simpleueqn}, $x\mapsto\nabla u(x)$ is a locally invertible function. This function is real analytic 
by hypothesis.  Therefore its inverse, $x\mapsto\nabla\phi$, is also real analytic. 
Analyticity of $\phi$ is an immediate consequence.
\end{proof}

This completes the proof of Proposition~\ref{prop:step2}, and with it the proof of the main theorem,
except for the deferred proof of Lemma~\ref{lemma:weightedlower}. 

\section{Proof of Lemma~\ref{lemma:weightedlower}}\label{section:finalproof}

Let $\Phi,\gamma$ be as defined above.  We seek to prove that
\begin{equation}\label{eq:injectivebound} \int_{\reals^d} |f(x)|^2 e^{-4\Phi(x)-\gamma(x)}\,dm(x)
\le C\int_{\reals^d} |\diver^* f(x)|^2 e^{-4\Phi(x)-2\gamma(x)}\,dm(x),\end{equation}
under the assumptions that $f$ is continuously differentiable, compactly supported, and satisfies 
$\int_{\reals^d} f\,dm=0$.
Substituting $fe^{-2\Phi(x)}=g(x)$, one has $\int_{\reals^d} e^{2\Phi}g\,dm=0$.
Using the expression \eqref{eq:diverstarformula} for $\diver^*$ gives
\begin{align*}
\int_{\reals^d} |\diver^* f(x)|^2 e^{-4\Phi(x)-2\gamma(x)}\,dm(x)
&= \int_{\reals^d} |\diver^* e^{2\Phi}g|^2 e^{-4\Phi-2\gamma}\,dm
\\&= \int_{\reals^d} |(-\nabla+2\nabla\Phi+\nabla\gamma) g|^2\,dm
\\&= \int_{\reals^d} |e^{2\Phi+\gamma}\nabla e^{-2\Phi-\gamma} g|^2\,dm.
\end{align*}

Thus Lemma~\ref{lemma:weightedlower} is equivalent to
\begin{lemma}
There exists $C<\infty$ such that for all sufficiently large $\lambda\in\reals^+$
all $\xi\in\complex^d$, and all continuously differentiable compactly supported functions
$g:\complex^d\to\complex$ satisfying
$\int_{\reals^d} e^{2\Phi}g\,dm=0$,
\begin{equation} \label{lemma:weightedlowerconjugated}
\int_{\reals^d} |g(x)|^2 e^{-\gamma(x)}\,dm(x)  
\le 
C \int_{\reals^d} |e^{2\Phi+\gamma}\nabla e^{-2\Phi-\gamma} g|^2\,dm.
\end{equation}
\end{lemma}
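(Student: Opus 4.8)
The plan is to conjugate once more and recognize \eqref{lemma:weightedlowerconjugated} as a weighted Poincar\'e inequality for a log-concave probability measure. Set $h=e^{-2\Phi-\gamma}g$, so that $g=e^{2\Phi+\gamma}h$ and $e^{2\Phi+\gamma}\nabla(e^{-2\Phi-\gamma}g)=e^{2\Phi+\gamma}\nabla h$. Then the left side of \eqref{lemma:weightedlowerconjugated} is $\int_{\reals^d}|h|^2e^{4\Phi+\gamma}\,dm$, the right side is $C\int_{\reals^d}|\nabla h|^2e^{4\Phi+2\gamma}\,dm$, and the hypothesis $\int_{\reals^d}e^{2\Phi}g\,dm=0$ becomes $\int_{\reals^d}h\,e^{4\Phi+\gamma}\,dm=0$. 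Since $e^{4\Phi(x)}\le Ce^{-c\lambda|x-x^\dagger|^2}$ while $e^{\gamma}$ grows only polynomially, $Z:=\int_{\reals^d}e^{4\Phi+\gamma}\,dm$ is finite; let $\mu$ be the probability measure with density $Z^{-1}e^{4\Phi+\gamma}$. The constraint says $\int h\,d\mu=0$, so $\int_{\reals^d}|h|^2e^{4\Phi+\gamma}\,dm=Z\operatorname{Var}_\mu(h)$, and because $\gamma\ge0$ it suffices to prove
\[ \operatorname{Var}_\mu(h)\le C\int_{\reals^d}|\nabla h|^2\,d\mu \]
with $C$ independent of $\xi$ and of $\lambda$ (taken large).

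The crucial observation will be that $\mu$ is \emph{uniformly strongly log-concave}. Its density has logarithm $4\Phi+\gamma$ (up to an additive constant), with Hessian $4\nabla^2\Phi+\nabla^2\gamma=-4\lambda\,\nabla^2\phi+\nabla^2\gamma$. Hypothesis \eqref{phiH4} furnishes $c_0>0$ with $\nabla^2\phi(x)\ge c_0 I$ for all $x$, hence $-4\lambda\,\nabla^2\phi\le-4c_0\lambda I$; and a direct computation, using $\gamma(x)=a\ln(1+|x-x^\dagger|^2)$, gives $\|\nabla^2\gamma(x)\|\le2a$ for every $x$, with a bound that does not depend on $x^\dagger$, hence not on $\xi$. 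Therefore, as soon as $\lambda>a/c_0$, the logarithm of the density of $\mu$ has Hessian $\le-2c_0\lambda I$ everywhere, uniformly in $x$ and $\xi$; equivalently, writing $\mu=e^{-V}\,dm$ up to a constant, $\nabla^2V\ge2c_0\lambda I$. This is the point at which the concavity (``subconvexity'') of $\Phi$ --- precisely the feature that defeats the standard integration-by-parts estimate for $\diver$ --- becomes an asset rather than a liability: after the two conjugations it is exactly what makes $\mu$ log-concave.

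With this in hand one invokes the Brascamp--Lieb variance inequality for the log-concave measure $\mu=e^{-V}\,dm$: for $\reals$-valued $h\in C^1$ with $\nabla h\in L^2(\mu)$,
\[ \operatorname{Var}_\mu(h)\le\int_{\reals^d}\big\langle(\nabla^2V)^{-1}\nabla h,\nabla h\big\rangle\,d\mu\le\frac{1}{2c_0\lambda}\int_{\reals^d}|\nabla h|^2\,d\mu, \]
and for complex-valued $h$ one applies this to $\Re h$ and $\Im h$ and adds. Since $g$ is $C^1$ with compact support, so is $h=e^{-2\Phi-\gamma}g$, and the inequality applies. Multiplying through by $Z$ and using $e^{4\Phi+\gamma}\le e^{4\Phi+2\gamma}$ yields $\int_{\reals^d}|h|^2e^{4\Phi+\gamma}\,dm\le(2c_0\lambda)^{-1}\int_{\reals^d}|\nabla h|^2e^{4\Phi+2\gamma}\,dm$, which for $\lambda$ large is \eqref{lemma:weightedlowerconjugated} with constant $1$ --- in any event with a constant uniform in $\lambda$ (large) and $\xi$. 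If one prefers a self-contained treatment in place of citing Brascamp--Lieb, the same bound follows from the Helffer--Sj\"ostrand/Bakry--\'Emery argument: solve $(\Delta-\nabla V\!\cdot\!\nabla)\psi=h-\int h\,d\mu$, differentiate the equation, and integrate by parts, the term involving $\nabla^2V$ now entering with the favorable sign. I expect the only genuinely delicate point to be the uniform log-concavity estimate --- in particular, verifying that neither the variation of $\nabla^2\phi$ nor the displacement of $x^\dagger$ (the dependence on $\Re\xi$) degrades the constant --- which is exactly the place where the hypotheses on $\phi$ and the requirement that $\lambda$ be large are used.
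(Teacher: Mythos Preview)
Your argument is correct and considerably more streamlined than the paper's. After the substitution $h=e^{-2\Phi-\gamma}g$ you correctly identify \eqref{lemma:weightedlowerconjugated} as the Poincar\'e inequality for the probability measure $\mu\propto e^{4\Phi+\gamma}\,dm$, and the uniform strong log-concavity of $\mu$ (the Hessian of $-(4\Phi+\gamma)$ dominates $2c_0\lambda I$ once $\lambda>a/c_0$, uniformly in $x$ and in $\xi$) lets Brascamp--Lieb/Bakry--\'Emery deliver the bound with constant $O(\lambda^{-1})$. The bookkeeping---that the orthogonality condition becomes $\int h\,d\mu=0$, that $\gamma\ge 0$ allows passage from weight $e^{4\Phi+\gamma}$ to $e^{4\Phi+2\gamma}$ on the gradient side, and that the complex case reduces to the real one---is all handled correctly.

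The paper proceeds quite differently. It writes $e^{-2\Phi-\gamma}g(x)$ as a line integral of $\nabla(e^{-2\Phi-\gamma}g)$ from a point $y$, multiplies by $e^{4\Phi(y)+\gamma(y)}$, and integrates over $y$; the orthogonality hypothesis kills the term $\int e^{2\Phi}g$, leaving a pointwise bound $|g(x)|\le C\lambda^{d/2}\int|u|\,I(x,u)\,|Sg(x+u)|\,du$ with an explicit kernel $I(x,u)=\int_1^\infty e^{\Phi^*(x+su)}s^{d-1}\,ds$. Concavity of $\Phi^*$ is then used to estimate $I$, and the resulting convolution-type operator is controlled via the Hardy--Littlewood maximal function. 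Your route bypasses all of this kernel analysis by invoking a known spectral-gap inequality; the paper's route is entirely self-contained but substantially longer, and as a byproduct yields a pointwise integral representation of $g$ in terms of $Sg$ that your approach does not. It is notable that the very feature (concavity of $\Phi$) which the paper's introduction flags as the obstacle to a standard integration-by-parts proof is, after your second conjugation, precisely what makes the log-concave machinery fire.
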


Define
\begin{equation}\label{eq:Phitildedefn} 
\left\{
\begin{aligned} 
&M_\Phi=\max_{x\in\reals^d} \Phi(x) \\
&\tilde\Phi = \Phi-\Phi(x^\dagger )=\Phi-M_\Phi\le 0. \\
&\Phi^* = 2\tilde\Phi + \gamma. \end{aligned} \right. \end{equation} 
Since $\phi$ is uniformly strictly convex, the Hessian matrix of $\Phi$ is uniformly
comparable to $-\lambda$. Therefore
for all sufficiently large $\lambda\in\reals^+$, 
\begin{equation}\label{eq:MPhibound} e^{4M_\Phi} 
\le C\lambda^{d/2}\int_{\reals^d} e^{4\Phi}\,dm
\le C\lambda^{d/2}\int_{\reals^d} e^{4\Phi+\gamma}\,dm.\end{equation}

\begin{lemma}\label{lemma:gSgbound}
There exists $C<\infty$ such that for any sufficiently large $\lambda\in\reals^+$,
any $\xi\in\reals^d$, and any compactly supported continuously differentiable function $g:\reals^d\to\complex$
satisfying $\int_{\reals^d} ge^{2\Phi}\,dm=0$, for any $x\in\reals^d$
\begin{equation}\label{eq:gSgbound} 
|g(x)| \le C\lambda^{d/2} \int_{\reals^d} |u|\,I(x,u)\,|Sg(x+u)|\,du \end{equation}
where
\begin{equation} I(x,u) = \int_1^\infty e^{\Phi^*(x+su)}s^{d-1}\,ds.  \end{equation}
\end{lemma}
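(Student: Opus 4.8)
The plan is to derive \eqref{eq:gSgbound} from an \emph{exact} pointwise representation of $g$, obtained by combining the constraint $\int_{\reals^d}g\,e^{2\Phi}\,dm=0$ with the fundamental theorem of calculus along line segments through $x$.

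First I would conjugate. Put $h=e^{-\Phi^*}g$, which is $C^1$ with compact support, and note that the operator $S$ appearing in \eqref{eq:gSgbound} — the one on the right of \eqref{lemma:weightedlowerconjugated} — is $Sg=e^{\Phi^*}\nabla(e^{-\Phi^*}g)=\nabla g-(\nabla\Phi^*)g$ with $\nabla\Phi^*=2\nabla\Phi+\nabla\gamma$, so that $\nabla h=e^{-\Phi^*}\,Sg$. Let $W=e^{\Phi^*+2\Phi}$, a strictly positive weight with $0<\int_{\reals^d}W\,dm<\infty$ (since $\Phi^*+2\Phi=4\tilde\Phi+\gamma+2M_\Phi$ is a Gaussian-type factor times the polynomial $e^{\gamma}$). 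Because $g=e^{\Phi^*}h$, the hypothesis $\int g\,e^{2\Phi}\,dm=0$ is exactly $\int_{\reals^d}Wh\,dm=0$, whence for every $x$
\[ h(x)=\frac{1}{\int W}\int_{\reals^d}W(y)\bigl(h(x)-h(y)\bigr)\,dm(y)=\frac{1}{\int W}\int_{\reals^d}W(y)\int_0^1\nabla h\bigl(y+t(x-y)\bigr)\cdot(x-y)\,dt\,dm(y). \]

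The substance of the proof is the change of variables that produces the radial integral $I(x,u)$. After taking absolute values and interchanging the order of integration so that $t$ is outermost, for fixed $t\in(0,1)$ substitute $u=(1-t)(y-x)$, so $y=x+u/(1-t)$, $\ y+t(x-y)=x+u$, $\ |x-y|=|u|/(1-t)$, and $dm(y)=(1-t)^{-d}\,dm(u)$; then substitute $s=(1-t)^{-1}\in(1,\infty)$, for which $dt=s^{-2}\,ds$, $\ (1-t)^{-(d+1)}\,dt=s^{d-1}\,ds$, and $x+u/(1-t)=x+su$. Carrying out the $t$-integration collapses the inner factor to $\int_1^\infty W(x+su)\,s^{d-1}\,ds$, giving
\[ |h(x)|\le\frac{1}{\int W}\int_{\reals^d}|u|\,\bigl|\nabla h(x+u)\bigr|\Bigl(\int_1^\infty W(x+su)\,s^{d-1}\,ds\Bigr)\,dm(u). \]
Now $\Phi\le M_\Phi$ everywhere forces $W(x+su)=e^{\Phi^*(x+su)+2\Phi(x+su)}\le e^{2M_\Phi}\,e^{\Phi^*(x+su)}$, so the bracket is $\le e^{2M_\Phi}I(x,u)$; together with $|\nabla h(x+u)|=e^{-\Phi^*(x+u)}|Sg(x+u)|$ and $|g(x)|=e^{\Phi^*(x)}|h(x)|$ this delivers a pointwise bound of the form \eqref{eq:gSgbound} with constant $e^{2M_\Phi}/\int W$.

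It then remains to check that $e^{2M_\Phi}/\int W\le C\lambda^{d/2}$ uniformly in $\xi\in\reals^d$ and in all large $\lambda$; this is the one place where hypothesis \eqref{phiH4} enters and where the explicit power $\lambda^{d/2}$ is born. Since $\gamma\ge 0$ we have $W\ge e^{2M_\Phi}e^{4\tilde\Phi}$, and since the Hessian of $\Phi$ is comparable to $-\lambda$ times the identity while $\nabla\Phi(x^\dagger)=0$, Taylor's theorem gives $\tilde\Phi(x)\ge-C\lambda|x-x^\dagger|^2$ with $C$ independent of $\xi,\lambda$; hence the Gaussian integral yields $\int_{\reals^d}e^{4\tilde\Phi}\,dm\ge c\lambda^{-d/2}$ and so $\int W\ge c\,e^{2M_\Phi}\lambda^{-d/2}$. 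Every interchange of integration and change of variables above is legitimate by absolute convergence, as $g$ (hence $h$) is continuous with compact support and $I(x,u)$ is finite for $u\ne0$ because the quadratic decay of $\tilde\Phi$ along the ray $s\mapsto x+su$ beats the logarithmic growth of $\gamma$. I do not expect a genuine obstacle here — the computation is a weighted Poincar\'e-type representation — the only points needing care being (i) arranging the two successive substitutions so that precisely $I(x,u)=\int_1^\infty e^{\Phi^*(x+su)}s^{d-1}\,ds$ emerges, with the correct Jacobian weight $s^{d-1}$, and (ii) keeping every constant, the factor $\lambda^{d/2}$ above all, uniform in $\xi$, which hinges entirely on the uniform comparability \eqref{phiH4} of the Hessian of $\phi$ to the identity.
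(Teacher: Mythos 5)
Your framework---conjugating to $h=e^{-\Phi^*}g$, integrating $h(x)-h(y)$ against $W=e^{\Phi^*+2\Phi}$, and the $(y,t)\to(u,s)$ change of variables producing the radial Jacobian $s^{d-1}$---coincides, up to the innocuous constant $e^{2M_\Phi}$, with the paper's argument (which writes $e^{-2\Phi-\gamma}g$ for $e^{-2M_\Phi}h$ and $e^{4\Phi+\gamma}$ for $e^{2M_\Phi}W$). But the final assembly has a genuine gap. After bounding $W(x+su)\le e^{2M_\Phi}e^{\Phi^*(x+su)}$ and substituting $|\nabla h(x+u)|=e^{-\Phi^*(x+u)}|Sg(x+u)|$ and $|g(x)|=e^{\Phi^*(x)}|h(x)|$, what you have actually proved is
\begin{equation*}
|g(x)|\le\frac{e^{2M_\Phi}}{\int W}\int_{\reals^d}|u|\,e^{\Phi^*(x)-\Phi^*(x+u)}\,I(x,u)\,|Sg(x+u)|\,dm(u),
\end{equation*}
not \eqref{eq:gSgbound}. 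The stray factor $e^{\Phi^*(x)-\Phi^*(x+u)}$ is not $O(1)$: since $\Phi^*(x^\dagger)=0$, $\Phi^*\le 0$, and $\Phi^*(x^\dagger+u)\asymp -\lambda|u|^2$, taking $x=x^\dagger$ gives $e^{-\Phi^*(x^\dagger+u)}\asymp e^{c\lambda|u|^2}$, which diverges with $\lambda$.

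The missing ingredient is the concavity of $\Phi^*$, which you never invoke. You discarded a full power of $e^{\Phi^*(x+su)}$ when bounding $W$: since $2\Phi=\Phi^*+2M_\Phi-\gamma$, one has $W=e^{2M_\Phi}e^{2\Phi^*}e^{-\gamma}$, so the tight bound (using only $\gamma\ge0$) is $W(x+su)\le e^{2M_\Phi}e^{2\Phi^*(x+su)}$, and the exponent to control becomes $\Phi^*(x)+2\Phi^*(x+su)-\Phi^*(x+u)$. Now $\Phi^*$ is concave for all large $\lambda$ (its Hessian is $2\nabla^2\Phi+\nabla^2\gamma\sim -2\lambda I+O(1)$), and $x+u=s^{-1}(x+su)+(1-s^{-1})x$ is a convex combination for $s\ge 1$, so $\Phi^*(x+u)\ge s^{-1}\Phi^*(x+su)+(1-s^{-1})\Phi^*(x)$. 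Therefore $\Phi^*(x)+2\Phi^*(x+su)-\Phi^*(x+u)\le s^{-1}\Phi^*(x)+(2-s^{-1})\Phi^*(x+su)\le \Phi^*(x+su)$, because $\Phi^*\le 0$ and $2-s^{-1}\ge 1$; the inner integral then collapses exactly to $I(x,u)$ with no spare exponential. (Your identification of the constant, $e^{2M_\Phi}/\int W=e^{4M_\Phi}/\int e^{4\Phi+\gamma}\le C\lambda^{d/2}$, is fine and is the paper's inequality \eqref{eq:MPhibound}.)
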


\begin{proof}
For any $x,y\in\reals^d$,
\begin{equation}
e^{-2\Phi(x)-\gamma(x)}(x)g(x) =  e^{-2\Phi(y)-\gamma(y)}g(y) + \int_0^1 (x-y)\cdot\nabla(e^{-2\Phi-\gamma}g)(y+t(x-y))\,dt
\end{equation}
and therefore
\begin{multline}
e^{4\Phi(y)+\gamma(y)} e^{-2\Phi(x)-\gamma(x)}g(x) 
\\ =  e^{2\Phi(y)}g(y) + e^{4\Phi(y)+\gamma(y)} \int_0^1 (x-y)\cdot\nabla(e^{-2\Phi-\gamma}g)(y+t(x-y))\,dt
\end{multline}
Integrating over $\reals^d$ with respect to $dm(y)$ and invoking the condition $\int ge^{2\Phi}\,dm=0$ gives
\begin{multline}\label{eq:gintegrated}
e^{-2\Phi(x)-\gamma(x)} g(x) \int_{\reals^d} e^{4\Phi+\gamma}\,dm
\\=  \int_{\reals^d} e^{4\Phi(y)+\gamma(y)} \int_0^1 (x-y)\cdot \nabla(e^{-2\Phi-\gamma}g)(y+t(x-y))\,dt\,dm(y).
\end{multline}

From \eqref{eq:MPhibound} together with \eqref{eq:gintegrated} we conclude the pointwise bound
\begin{equation}\label{eq:gpointwisebound} 
|g(x)| 
\\ \le C\lambda^{d/2} \int_{\reals^d}\int_0^1 
e^{\Theta(x,y,t)}
|x-y|\,|Sg(tx+(1-t)y)|\,dt\,dm(y)
\end{equation}
where
\begin{align*}
\Theta (x,y,t)
& = \Phi^*(x)+2\Phi^*(y)-\Phi^*(tx+(1-t)y)-\gamma(y)
\\& \le \Phi^*(x)+2\Phi^*(y)-\Phi^*(tx+(1-t)y)
\end{align*}
since $\gamma\ge 0$.  Thus \eqref{eq:gpointwisebound} becomes
\begin{equation} |g(x)| 
\le C\lambda^{d/2} \int_{\reals^d}\int_0^1 e^{\Phi^*(x)+2\Phi^*(y)-\Phi^*(tx+(1-t)y)}
|x-y|\,|Sg(tx+(1-t)y)|\,dt\,dm(y).
\end{equation}

Substitute $(t,y)\leftrightarrow (t,u)$ where $tx+(1-t)y = x+u$, so that $|x-y|=(1-t)^{-1}|u|$, and
then substitute $s=(1-t)^{-1}$ to deduce that
\begin{align*} 
|g(x)| &\le  C\lambda^{d/2} \int_{\reals^d}
\Big(\int_0^1 e^{\Phi^*(x)+2\Phi^*(x+(1-t)^{-1}u)-\Phi^*(x+u)}(1-t)^{-d-1}\,dt\Big) |u|\,|Sg(x+u)|\,dm(u)  
\\&= C\lambda^{d/2} \int_{\reals^d}
\Big( \int_1^\infty e^{\Phi^*(x)+2\Phi^*(x+su)-\Phi^*(x+u)}s^{d-1}\,ds \Big)\,
|u|\,|Sg(x+u)|\,dm(u).
\end{align*}

$\Phi^*$ is a concave function for all sufficiently large $\lambda$, 
and \[ x+u = s^{-1}(x+su) + (1-s^{-1})x,\]
so for all $s\in[1,\infty)$ and $x,u\in\reals^d$,
\begin{equation*} \Phi^*(x+u) \ge s^{-1}\Phi^*(x+su) + (1-s^{-1})\Phi^*(x).\end{equation*}
Therefore
\begin{align*} \Phi^*(x)+2\Phi^*(x+su)-\Phi^*(x+u)
&\le (1-s^{-1})\Phi^*(x)+(1+s^{-1})\Phi^*(x+su)
\\& \le \Phi^*(x+su)  \end{align*}
since $\Phi^*\le 0$.
Inserting this bound into the last inequality of the preceding paragraph completes the proof of Lemma~\ref{lemma:gSgbound}.
\end{proof}

\begin{lemma}
\begin{equation}
|u| I(x,u) \le
\begin{cases}
C(1+|x-x^\dagger|)^{d-1}|u|^{1-d} \qquad &\text{for all $u,x$}
\\
Ce^{-c|u|^2} &\text{if } |u|\ge 2|x-x^\dagger|.
\end{cases}
\end{equation}
\end{lemma}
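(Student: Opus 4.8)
The plan is to estimate $|u|\,I(x,u)$ directly from the definition $I(x,u)=\int_1^\infty e^{\Phi^*(x+su)}s^{d-1}\,ds$, exploiting the Gaussian-type decay of $e^{2\tilde\Phi}$ and the logarithmic growth of $\gamma$. Recall $\Phi^*=2\tilde\Phi+\gamma$, that $e^{2\tilde\Phi(w)}\le Ce^{-c\lambda|w-x^\dagger|^2}$, and that $e^{\gamma(w)}=(1+|w-x^\dagger|^2)^{a}$. Writing $r=|x-x^\dagger|$ and using the substitution $v=s|u|$ (so $s^{d-1}\,ds=|u|^{-d}v^{d-1}\,dv$), we get
\begin{equation*}
|u|\,I(x,u)=|u|^{1-d}\int_{|u|}^\infty e^{\Phi^*(x+(v/|u|)u)}v^{d-1}\,dv,
\end{equation*}
and along the ray $s\mapsto x+su$ the point $x+su$ moves away from $x^\dagger$ roughly linearly in $s$ once $s$ is large, so $e^{\Phi^*(x+su)}$ is bounded by $C(1+r+s|u|)^{2a}e^{-c\lambda(\text{distance})^2}$. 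The first bound, valid for all $u,x$, comes from simply discarding the Gaussian factor (it is $\le C$ since $\Phi^*\le 0$... but that is too crude for the polynomial factor) — more precisely, one keeps just enough of the Gaussian to control the polynomial growth $(1+|x+su|_{\text{from }x^\dagger})^{2a}$ in $s$, integrates $s^{d-1}$ against a rapidly decaying tail, and is left with the prefactor $(1+r)^{d-1}$ after the $v$-integral is seen to be $O((1+r)^{d-1})$.

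More carefully, for the first case I would split the integral $\int_1^\infty$ at $s\sim r/|u|$ (if that exceeds $1$). For $s\lesssim r/|u|$ the argument $x+su$ stays within distance $O(r)$ of $x^\dagger$, so $e^{\Phi^*(x+su)}\le C(1+r)^{2a}$, contributing $C(1+r)^{2a}(r/|u|)^{d}$ after integrating $s^{d-1}$; multiplying by $|u|$ gives $C(1+r)^{2a}r^d |u|^{1-d}$ — which is worse than claimed, so instead one must retain the Gaussian. The right way: on $s\ge 1$, $|x+su-x^\dagger|\ge |s|u|-r|$ when $s|u|\ge r$, so $e^{2\tilde\Phi(x+su)}\le Ce^{-c\lambda(s|u|-r)^2}$ there; combined with $e^{\gamma}\le C(1+r+s|u|)^{2a}$ one gets exponential decay in $s$ beyond $s\gtrsim r/|u|$. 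Hence the integral is dominated by the region $s|u|\lesssim r$ where we instead use $\Phi^*\le 0$ outright to bound $e^{\Phi^*(x+su)}\le 1$, giving $\int_1^{r/|u|}s^{d-1}\,ds\le C(r/|u|)^{d-1}\cdot(1/|u|)$... Let me restate: $\int_1^{\max(1,r/|u|)}s^{d-1}\,ds\le C\max(1,r/|u|)^{d-1}$ only when combined correctly — actually $\int_1^{R}s^{d-1}ds=\frac{R^d-1}{d}$. The cleanest route is: bound $I(x,u)\le \int_1^\infty e^{\Phi^*(x)+(\text{correction})}\dots$ — but since the paper's own Lemma~\ref{lemma:gSgbound} proof already established $\Phi^*(x+su)\le \Phi^*(x)$ along such rays (from concavity and $\Phi^*\le 0$, exactly as shown there with $x+u$ replaced by $x+su$, noting $x+su = $ a point further out), one has $I(x,u)\le e^{\Phi^*(x)}\int_1^\infty e^{(\text{something decaying})}s^{d-1}\,ds$; the decaying factor is $e^{2\tilde\Phi(x+su)-2\tilde\Phi(x)}$ which by concavity of $\tilde\Phi$ is $\le e^{-c\lambda(s-1)^2|u|^2}$ or similar, making the $s$-integral $O((\lambda|u|^2)^{-d/2}+1)$ and yielding $|u|I(x,u)\le C|u|e^{\Phi^*(x)}\le C|u|(1+r)^{2a}$ uniformly — still not matching.

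I think the intended bounds actually require tracking the concavity more carefully, and the honest statement is: the first bound follows by estimating $e^{\Phi^*(x+su)}\le C(1+|x+su-x^\dagger|)^{2a}\le C(1+r)^{2a}(1+s|u|)^{2a}$ and then observing $I$ converges only because of the Gaussian part which forces $s|u|\lesssim \sqrt{r^2+\lambda^{-1}\log}$, while the second, exponentially decaying bound when $|u|\ge 2r$ follows because then $x+su$ (for $s\ge1$) satisfies $|x+su-x^\dagger|\ge s|u|-r\ge |u|/2$, so $e^{2\tilde\Phi(x+su)}\le Ce^{-c\lambda|u|^2 s^2/4}$ and $I(x,u)\le C\int_1^\infty e^{-c\lambda s^2|u|^2/8}(1+s|u|)^{2a}s^{d-1}\,ds\le Ce^{-c\lambda|u|^2/16}$, and multiplying by $|u|$ keeps the exponential. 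The main obstacle is the first (uniform) bound: getting exactly the exponent $(1+r)^{d-1}|u|^{1-d}$ without extraneous factors of $(1+r)^{2a}$ or $|u|$, which I expect requires choosing the split point as $s_0=\max(1, (1+r)/|u|)$, using $\Phi^*\le 0$ (i.e. $e^{\Phi^*}\le 1$) on $[1,s_0]$ to get $\int_1^{s_0}s^{d-1}ds\le Cs_0^d\le C((1+r)/|u|)^d$ hence a contribution $C|u|\cdot((1+r)/|u|)^d=C(1+r)^d|u|^{1-d}$ — and then absorbing one power of $(1+r)$ by noting that in fact on this range one can do better, or alternatively the statement in the paper is $(1+|x-x^\dagger|)^{d-1}$ because one integrates $s^{d-1}$ only up to where $s|u|\asymp 1+r$ and $\int s^{d-1}$ over a dyadic-type decomposition weighted by the tail gives the sharper $(1+r)^{d-1}$. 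I would carry out the dyadic decomposition of the $s$-integral into $\{2^k\le s|u|/(1+r)<2^{k+1}\}$, bound the $k\le 0$ piece by $\Phi^*\le0$ and the $k\ge1$ pieces by the Gaussian, sum the geometric series, and arrive at the claimed bounds — the second case being immediate from the same decomposition once $|u|\ge 2|x-x^\dagger|$ makes even the $k=0$ term fall in the Gaussian regime.
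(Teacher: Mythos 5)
Your proposal never actually proves the first (uniform) bound, and you say so yourself several times ("worse than claimed", "still not matching", "the main obstacle"). The root of the trouble is that you decompose $\Phi^*=2\tilde\Phi+\gamma$ and treat $e^{\gamma(w)}=(1+|w-x^\dagger|^2)^{a}$ as a polynomially \emph{growing} factor to be fought against. This forces you into a dyadic splitting that, by your own accounting, produces factors like $(1+r)^{2a}$ or $(1+r)^d$ rather than the claimed $(1+r)^{d-1}$; the closing sentence that a dyadic decomposition "arrives at the claimed bounds" is an assertion, not a proof, and the intermediate calculations you exhibit contradict it.

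The missing observation is the one the paper states up front: for all sufficiently large $\lambda$ the full function $\Phi^*=2\tilde\Phi+\gamma$ already satisfies $\Phi^*(w)\le -c\lambda|w-x^\dagger|^2$ uniformly, because the Hessian of $2\tilde\Phi$ is uniformly comparable to $-\lambda I$ while the Hessian of $\gamma$ is bounded independently of $\lambda$ — so the concave part swallows the log term entirely (both vanish, with vanishing gradient, at $x^\dagger$). There is then nothing to fight: your own substitution $v=s|u|$ gives
\begin{equation*}
|u|\,I(x,u)\le |u|^{1-d}\int_0^\infty e^{-c\lambda\,|x-x^\dagger+(v/|u|)u|^2}\,v^{d-1}\,dv,
\end{equation*}
and since $|x-x^\dagger+(v/|u|)u|\ge |v-v_0|$ with $v_0\le |x-x^\dagger|$, the integral is a one-dimensional Gaussian tail centered at $v_0$, which is $O\bigl((1+|x-x^\dagger|)^{d-1}\bigr)$ for $\lambda$ bounded below. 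No splitting, no polynomial losses. Your argument for the second regime ($|u|\ge 2|x-x^\dagger|$) is essentially correct and matches the paper, but it too becomes one line once the unified Gaussian bound on $e^{\Phi^*}$ is in hand.
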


\begin{proof}
Recall that $\Phi^* = \Phi-M_\Phi+\gamma$ is real-valued, nonpositive, and concave, and vanishes at $x^\dagger$.
$\Phi$ has a negative definite Hessian which is uniformly comparable to $-\lambda$,
while $\gamma$ is independent of $\lambda$ and has a Hessian which is bounded above and below.
Therefore 
\begin{equation} \Phi^*(x)\le -c\lambda|x-x^\dagger|^2,\end{equation}
uniformly in $x,\lambda,\xi$ for all sufficiently large $\lambda$.

Therefore for $u\ne 0$,
\begin{align*}|I(x,u)| &\le \int_0^\infty e^{-c\lambda|x-x^\dagger+su|^2}s^{d-1}\,ds 
\\&=|u|^{-d}\int_0^\infty e^{-c|x-x^\dagger-ru|^2/|u|^2}r^{d-1}\,dr
\\&\le  C|u|^{-d}(1+|x-x^\dagger|)^{d-1}.
\end{align*} 

If $|u|\ge 2|x-x^\dagger|$ then for all $s\ge 1$, 
\begin{equation} \Phi^*(x+su)\le -c\lambda|x+su-x^\dagger|^2 \le -c'\lambda s^2|u|^2.\end{equation}
It has already been noted above that
$\Phi^*(x)+2\Phi^*(x+su)-\Phi^*(x+u)\le \Phi^*(x+su)$. 
Consequently 
\begin{align*} 
|I(x,u)| &\le  \int_1^\infty e^{\Phi^*(x+su)}s^{d-1}\,ds
\\&\le e^{-c''|u|^2}\  \text{ if } |u|\ge 2|x-x^\dagger|.\end{align*}
\end{proof}

Therefore there exist $C,c\in\reals^+$ such that
\begin{multline}
|g(x)|\le C(1+|x-x^\dagger|)^{d-1} \lambda^{d/2} \int_{|u|\le 2|x-x^\dagger|} |u|^{1-d} |Sg(x+u)|\,dm(u)
\\+ C \lambda^{d/2} \int_{\reals^d} e^{-c|u|^2} |Sg(x+u)|\,dm(u).
\end{multline}

The second term on the right-hand side represents the action on $|Sg|$ of a bounded linear operator
form $L^2(\reals^d)$ to $L^2(\reals^d)$, whose operator norm is proportional to $\lambda^{d/2}$.
Since the function $u\mapsto |u|^{1-d}$ is a positive decreasing function of $|u|$ and satisfies
\[\int_{|u|\le 2|x-x^\dagger|}|u|^{1-d} \le C|x-x^\dagger|,\]
one has
\begin{multline} (1+|x-x^\dagger|)^{d-1} \lambda^{d/2} \int_{|u|\le 2|x-x^\dagger|} |u|^{1-d} |Sg(x+u)|\,dm(u)
\\ \le C\lambda^{d/2}(1+|x-x^\dagger|)^d M(Sg)(x), \end{multline}
where $M$ is the Hardy-Littlewood maximal function.  Now $M$ is bounded on $L^2(\reals^d)$,
while multiplication  by $(1+|x-x^\dagger|)^d$ defines a bounded operator from $L^2(\reals^d)$
to the weighted space $L^2(\reals^d, w)$ with $w(x) = (1+|x-x^\dagger|)^{-2d}$.
This completes the proof of Lemma~\ref{lemma:weightedlowerconjugated}
and hence of Lemma~\ref{lemma:weightedlower}.

\end{document}